\DeclareMathOperator{\bigO}{\mathcal{O}}
\newtheorem{theorem}{Theorem}[section]
\newtheorem{lemma}{Lemma}[section]
\theoremstyle{definition}
\newtheorem{remark}{Remark}[section]
\newtheorem{example}{Example}[section]
\DeclareMathOperator{\rank}{rank}
\DeclareMathOperator{\diag}{diag}
\DeclareMathOperator{\HH}{\sf H}
\DeclareMathOperator{\T}{\sf T}
\def\bbC{\mathbb{C}}
\let\veqno\@@eqno
\def\ol{\overline}
\title{A decoupled form of the structure-preserving \\ doubling algorithm with low-rank structures}
\date{}
\author{
Zhen-Chen Guo\thanks{Department of Mathematics, Nanjing University,  Nanjing  210093, People's Republic of China; \texttt{e-mail: guozhenchen@nju.edu.cn}} \and 
Eric King-wah Chu\thanks{School of Mathematics, 9 Rainforest Walk, Monash University, Victoria 3800, Australia; {\tt  e-mail: eric.chu@monash.edu}} \and
Xin Liang\thanks{Yau Mathematical Sciences Center, Tsinghua University, Beijing 10084, People's Republic of China; \texttt{e-mail: liangxinslm@tsinghua.edu.cn}} \and 
Wen-Wei Lin\thanks{Department of Applied Mathematics, National Chiao Tung University, Hsinchu 300, Taiwan; \texttt{e-mail: wwlin@math.nctu.edu.tw}}.
}
\begin{document}
%
%
%
\newcommand{\bs}{\begin{slide}{}}
\newcommand{\es}{\end{slide}} \newcommand{\be}{\begin{eqnarray*}}
\newcommand{\ee}{\end{eqnarray*}}
\newcommand{\bt}{\begin{tabular}}
\newcommand{\et}{\end{tabular}} \newcounter{bean}
\newcommand{\bl}[1]{\begin{list}{#1}{\usecounter{bean}}} \newcommand{\el}{\end{list}}
\newcommand{\bel}[1]{\begin{equation} \label{#1}} \newcommand{\eel}{\end{equation}}
\newenvironment{mat}{\left[\begin{array}}{\end{array}\right]}
\newenvironment{dt}{\left|\begin{array}}{\end{array}\right|} \newcommand{\bip}{\langle}
\newcommand{\eip}{\rangle} \newenvironment{arr}{\begin{array}}{\end{array}} \def\xa{x_{1}}
\def\xb{x_{2}} \def\xc{x_{3}} \newcommand{\ebs}{\end{slide}\begin{slide}{}}
\newcommand{\bc}{\begin{center}} \newcommand{\ec}{\end{center}}
\newcommand{\Lra}{\Leftrightarrow} \newcommand{\lra}{\leftrightarrow}
\newcommand{\Llra}{\Longleftrightarrow} \newcommand{\La}{\Leftarrow}
\newcommand{\Ra}{\Rightarrow} \newcommand{\la}{\leftarrow} \newcommand{\ra}{\rightarrow}
\def\eop{\hfill $\Box$ \vspace{5mm}}
\def\diag{\mathrm{diag}} \def\rank{\mathrm{rank}}
\def\sr{{\cal R}} \def\cs{{\cal C}} \def\vx{{\bf x}}
\def\rank{\mathrm{r}}
\maketitle

\begin{abstract}
	The structure-preserving doubling algorithm (SDA)  is a fairly efficient method for solving problems closely related to Hamiltonian (or Hamiltonian-like) matrices, such as  computing the required solutions to  algebraic Riccati equations. However, for  large-scale problems in $\mathbb{C}^n$ (also $\mathbb{R}^n$), the SDA with an $\bigO(n^3)$ computational complexity does not work well. In this paper, we propose a new  decoupled form of the  SDA (we name it as dSDA), building on the associated Krylov subspaces thus leading to the inherent low-rank structures. Importantly, the approach decouples the original two to four iteration formulae. 	The resulting dSDA is much more efficient since only one quantity (instead of the original two to four) is computed iteratively. For large-scale problems, further efficiency is gained from the low-rank structures. This paper presents the theoretical aspects of the dSDA. A practical algorithm dSDA$\rm{_t}$ with truncation and many  illustrative numerical results will appear in a second paper. 
\end{abstract}

\textbf{Keywords.}
structure-preserving doubling algorithm, low-rank structure, decoupled form

\pagestyle{myheadings} \thispagestyle{plain}
\markboth{Z.-C. Guo,  E.K.-W. Chu, X. Liang and W.-W. Lin}
{DSDA: a Decoupled Form for the SDA}

\section{Introduction} 

The  doubling algorithm (DA), in some sense, skips many items in the iteration process and only computes the $k$-th iterates with  $k=2^j, j=0, 1, 2, \cdots$.  
The  DA idea can  at least be traced back to, to the best of our knowledge, the nineteen seventies ---  in \cite{davisonM1973numerical, friedlanderKL1976scatteringII,lainiotis1976partitioned} the DA was adopted to solve the  matrix Riccati differential equations. 
In 1978 Anderson~\cite{anderson1978secondorder}   compendiously surveyed  the existing DAs at that time and firstly  introduced the structure-preserving doubling algorithm (SDA) for algebraic Riccati equations. 
In the last two decades or more, an enormous amount of research efforts goes into the remarkable method, including theories, numerical algorithms and efficient implementation;  
please  consult~\cite{chuFLW2004structurepreserving, huangLL2018structurepreserving, biniIM2012numerical, chuFL2005structurepreserving} and the references therein.  
In~\cite{chuFLW2004structurepreserving}, Chu et al. revisited the  SDA and applied  successively  it to the  periodic discrete-time algebraic Riccati equations. 
Since then, the SDA has been generalized for many  matrix equations, such as  the  continuous-time algebraic equations~\cite{chuYL2005structurepreserving, huangL2009structured,huangLL2018structurepreserving}, 
the  M-matrix algebraic Riccati equations~\cite{guoLX2006structurepreserving, xueL2017highly,xueXL2012accurate, guoIM2007doubling}, and the $H^{*}-$matrix algebraic Riccati equations~\cite{liuX2016complex}.  
Some related eigenvalue problems, such as the palindromic quadratic eigenvalue problems~\cite{guoL2010solving, luYL2015new,luWKLL2016fast} and the Bethe-Salpeter eigenvalue problems~\cite{guoCL2019doubling}, have also been treated. 

The classical SDA possesses an $\bigO(n^3)$ computational complexity for problems in $\mathbb{R}^n$ or $\mathbb{C}^n$, and is best suited for moderate values of  $n$, with its global and quadratic convergence~\cite{linX2006convergence}; for the linear convergence in the critical case, please consult \cite{huangL2009structured}. 
However, for large-scale problems, the original SDA obviously does not work efficiently, because of its computational complexity, or the high costs in memory requirement and execution time.

In this paper  we emphasize on the numerical solution of large-scale algebraic Riccati equations (AREs) with low-rank structures  by the SDA. We consider the discrete-time algebraic Riccati equations (DAREs), the continuous-time algebraic Riccati equations (CAREs) and the M-matrix algebraic Riccati equations (MARE).   
For these AREs,  the SDA has three or four coupled recursions (see \eqref{eq:sda} and \eqref{eq:maresda} in Section~\ref{sec:structure-preserving-doubling-algorithm}). For large-scale problems however, one recursion has been applied implicitly (because of the loss of sparsity), leading to a flop count with an exponentially increasing constant thus inefficiency. We  propose a {\it new\/} form of the SDA (namely dSDA),  which decouples the three (see~\eqref{eq:sda}) or  four (see~\eqref{eq:maresda})  recursions. 
Because of the decoupling, the dSDA computes more efficiently, with only one recursion for the desired numerical solution.   

Our main contributions are summarized as follows:
\begin{enumerate}
	\item We decouple the three recursions \eqref{eq:sda} and four recursions \eqref{eq:maresda}  in the original SDA and develop the dSDA. On the surface, the new method is closely related to the Krylov subspace projection methods but the dSDA inherits the sound theoretical foundation of the SDA. 
    	\item  The original SDA has three (or four) iteration formulae \eqref{eq:sda} (or \eqref{eq:maresda}) for $A_k$ (or $E_k$ and $F_k$), $G_k$ and $H_k$ in  the $k$-th iteration. The dSDA for large-scale AREs no longer requires $A_k$ (or $E_k$ and $F_k$), thus eliminating the $2^k$ factor in the flop count and improving the efficiency tremendously. We only compute $H_k$, the desired approximate solution of the ARE.
\end{enumerate}

This paper presents the theoretical aspects of the dSDA. A practical algorithm dSDA$\rm{_t}$ with truncation and the illustrative numerical results will appear in a second paper.

\subparagraph{Notations}\label{spara:notations}
The null matrix is $0$ and the   $n$-by-$n$ identity matrix is denoted by $I_n$, with its subscript ignored when the size is clear;  $(\cdot)^{\HH}$ and $(\cdot)^{\T}$  take the conjugate transpose and the transpose of matrices, respectively. The complex conjugate of a matrix $A$ is  $\overline{A}$. The 2-norm is denoted by $\| \cdot\|$. By $M\oplus N$, we denote $\begin{bmatrix}
	M&0\\0&N
\end{bmatrix}$.

\subparagraph{Organization}\label{spara:organization}
We  revisit the SDA  for the DAREs, CAREs and MAREs in Section~\ref{sec:structure-preserving-doubling-algorithm},  and  then develop the dSDA  for these  AREs  in Section~\ref{sec:decoupled-form-for-sda}; the SDA is also extended for the Bethe-Salpeter eigenvalue problems (BSEPs).  Some numerical results are presented in Section~\ref{sec:numerical-example}, and  
some conclusions are drawn in Section~\ref{sec:conclusions}.

The Sherman-Morrison-Woodbury formula (SMWF): 
\begin{equation}\label{eq:smwf}
(M + UDV^{\T})^{-1} = M^{-1} - M^{-1} U (D^{-1} + V^{\T} M^{-1} U)^{-1} V^{\T} M^{-1},
\end{equation}
with the inverse sign indicating invertibility, will be applied occasionally.

\section{Structure-preserving doubling algorithm}\label{sec:structure-preserving-doubling-algorithm}

Consider the linear time-invariant control system in continuous-time:
\[
\dot{x}(t) = Ax(t) + Bu(t), \ \ \ y(t) = Cx(t),
\]
where $A \in \mathbb{R}^{n \times n}$, $B \in \mathbb{R}^{n \times m}$ and $C \in \mathbb{R}^{l \times n}$ with $m,l \leq n$, $x(t)$ is the state vector and $u(t)$ is the control vector.  
The linear-quadratic (LQ)  optimal control minimizes the cost functional
$J_c(x,u) \equiv \int_0^\infty \left[ x(t)^{\T} H x(t) + u(t)^{\T} R u(t) \right] dt$,
with $H \equiv C^{\T} C \geq 0$ and $R > 0$. Here, a symmetric matrix $M > 0$ ($\geq 0$) when all its eigenvalues are positive (non-negative). Also, $M > N$ ($M \geq N$) if and only if $M - N > 0$ ($\geq 0$). With $G \equiv BR^{-1} B^{\T} \geq 0$, 
the optimal control $u(t) = -R^{-1} B^{\T} X x(t)$ can be expressed in terms of the unique Hermitian  positive semi-definite (psd) stabilizing solution $X$ of the CARE \cite{biniIM2012numerical,chuYL2005structurepreserving,lancasterR1991solutions,mehrmann1991automomous}:
\bel{eq:care}
\mathcal{C}(X) \equiv A^{\T} X + X A - X G X + H = 0.
\eel
In the paper we shall assume without loss of generality that $B$ and $C^{\T}$ are of full column rank and $R = I_m$, for the sake of simpler notations in later development.

Analogously, for the LQ optimal control of the linear time-invariant control system in discrete-time:
 \[
	 x_{k+1} = A x_k + B u_k, \ \ \ k=0,1, 2, \cdots,
 \]
the corresponding optimal control $u_k = -(R + B^{\T} X B)^{-1} B^{\T} X A x_k$ can be expressed in terms of the unique psd stabilizing solution $X$ of the discrete-time algebraic Riccati equation (DARE) \cite{biniIM2012numerical,chuFLW2004structurepreserving,lancasterR1991solutions,mehrmann1991automomous}:  
\begin{equation}\label{eq:dare}
	\mathcal{D}(X) \equiv 
-X + A^{\T} X (I + GX)^{-1} A + H = 0.  
\end{equation}
Let  $A_0\equiv A,  G_0\equiv G$, and $H_0\equiv H$. 
Assuming that $I_n + G_k H_k$ are nonsingular for $k=0, 1, \cdots$, the SDA for DAREs has three iterative recursions: 
\begin{equation}\label{eq:sda}
	\begin{aligned}[b]
		A_{k+1}& = A_k (I_n + G_k H_k)^{-1} A_k, \\ 
		G_{k+1} &= G_k + A_k (I_n + G_k H_k)^{-1}  G_k A_k^{\T}, \\ 
		H_{k+1} &= H_k + A_k^{\T} H_k(I_n + G_k H_k)^{-1} A_k. 
	\end{aligned}
\end{equation}
We have $A_k \ra 0$, $G_k \ra Y$ (the solution to the dual DARE) and $H_k \ra X$, all quadratically \cite{mehrmann1991automomous} except for the critical case~\cite{huangL2009structured}.  

After the Cayley transform with  nonsingular $A_\gamma := A - \gamma I$ ($\gamma > 0$) and $K_\gamma := A_\gamma^{\T} + H A_\gamma^{-1} G$, the SDA for CAREs \cite{chuYL2005structurepreserving} shares the same formulae \eqref{eq:sda}, with the alternative starting points:
\begin{align}\label{eq:starting-points-care}
	A_0 = I_n + 2\gamma K_\gamma^{-{\T}}, \ \ G_0 = 2\gamma A_\gamma^{-1} G K_\gamma^{-1}, \ \ H_0 = 2\gamma K_\gamma^{-1} H A_\gamma^{-1}. 
\end{align}

Unlikely  to the doubling  formulae~\eqref{eq:sda} for DAREs and CAREs, the SDA has four coupled iteration  recursions for the MARE:
\begin{equation}\label{eq:mare}
XCX - XD - AX + B = 0, 
\end{equation}
where 	$A\in \mathbb{R}^{m\times m}, \quad B,X\in \mathbb{R}^{m\times n}$, $C\in \mathbb{R}^{n\times m}$ 
and $D\in \mathbb{R}^{n\times n}$.   With $a_{ii}$ and $d_{jj}$ respectively being the 
diagonal entries of $A$ and $D$,  for $\gamma \geq \max_{i,j} \{ a_{ii},\, d_{jj} \}$, let  
\begin{align*}
	A_\gamma& := A + \gamma I_m, \qquad  &  D_\gamma &:= D + \gamma I_n,  \\
	W_\gamma &:= A_\gamma - BD_\gamma^{-1} C, \qquad &  V_\gamma &:= D_\gamma - CA_\gamma^{-1} B, \\
	F_0& = I_m - 2\gamma W_\gamma^{-1}, \qquad & E_0 &= I_n - 2\gamma V_\gamma^{-1},  \\
	H_0 &= 2\gamma W_\gamma^{-1} B D_\gamma^{-1}, \qquad  & G_0 &= 2\gamma D_\gamma^{-1} C W_\gamma^{-1}. 
\end{align*}
Assuming that $I_m - H_k G_k$ and $I_n - G_k H_k$ are nonsingular for $k=0, 1, \cdots$, 
the SDA for MAREs has the form:
\begin{equation}\label{eq:maresda}
	\begin{aligned}
		F_{k+1} &= F_k (I_m - H_k G_k)^{-1} F_k, \qquad & E_{k+1}& = E_k (I_n - G_k H_k)^{-1} E_k, \\
		H_{k+1} &= H_k + F_k (I_m-H_k G_k)^{-1} H_k E_k, \qquad & G_{k+1} &= G_k + E_k (I_n-G_k H_k)^{-1} G_k F_k, 
	\end{aligned}
\end{equation}
where  $E_k, F_k \ra 0$ and $H_k \ra X$, $G_k \to Y$ ($Y$ is the unique minimal nonnegative solution to the dual MARE) as $k \ra \infty$. 

MAREs have been considered widely in \cite{guoIM2007doubling, liuX2016complex, xueXL2012accurate, xueL2017highly, guoLX2006structurepreserving,liCKL2013solving,  baiGL2008fast,biniMP2010transforming,biniIP2008fast,chiangCGHLX2009convergence,guo2001nonsymmetric,guoL2000iterative,guoL2010convergence,juang1995existence,mehrmannX2008explicit,wangWL2012alternatingdirectional,chguoN2007iterative,lu2005newton}, usually with $M = \begin{mat}{rr} D & -C \\ -B & A \end{mat}$ being a nonsingular or an  irreducible singular  M-matrix  for   the solvability  of \eqref{eq:mare}. Actually, \eqref{eq:mare} has a unique minimal nonnegative solution in such conditions. Here, a matrix  is  nonnegative  if all its entries are  nonnegative and $X$ is the minimal nonnegative solution if $\widetilde{X}-X$ is nonnegative for all solutions $\widetilde{X}$.

\section{Decoupled form of SDA}\label{sec:decoupled-form-for-sda}

For the classical SDA, when the initial iterates possess low-rank structures, the three  coupled iterates (four for MAREs or two for BSEPs) can be decoupled. This leads to the decoupled form in the dSDA shown  in  this section.  This section contains many tedious but necessary details and the SMWF~\eqref{eq:smwf} will be used  repeatedly.  

\subsection{dSDA for DAREs}\label{ssec:dsda-for-dares}

\subsubsection{New formulation for the first step}\label{sssec:new-formulation-for-the-first-step}

With  the initial values  
$A_0 = A$,
$G_0 = BB^{\T}$ and 
$H_0 = C^{\T} C$, where $I_n + G_0 H_0$ is nonsingular, we are going to reformulate $A_1, G_1$ and $H_1$. 
By the SMWF~\eqref{eq:smwf}, with $Y_0 := B^{\T} C^{\T}$, $U_0 := B$, $V_0 := C^{\T}$, 
$E_0 := I_m + Y_0 Y_0^{\T}$, $F_0 := I_l + Y_0^{\T} Y_0$ and $K_0 := E_0^{-1} Y_0 = Y_0 F_0^{-1}$,   
we have
\begin{align*}
	(I_n + G_0 H_0)^{-1} &= (I_n + BB^{\T} C^{\T} C)^{-1} 
	= I_n - B (I_m + B^{\T} C^{\T} C B)^{-1}  B^{\T} C^{\T} C \\  
	&= I_n - U_0 (I_m + Y_0 Y_0^{\T})^{-1} Y_0 V_0^{\T} 
	\equiv I_n - U_0 K_0 V_0^{\T}. 
\end{align*}
Similarly, with the symmetric $I_m - K_0 Y_0^{\T} = E_0^{-1}$ and $I_l - Y_0^{\T} K_0 = F_0^{-1}$, it holds that  
\begin{align*}
	(I_n+G_0H_0)^{-1} G_0 &= (I_n - U_0 E_0^{-1} Y_0 V_0^{\T}) U_0 U_0^{\T} 
	\equiv U_0 E_0^{-1} U_0^{\T}, \\
	H_0 (I_n+G_0 H_0)^{-1} &= V_0 V_0^{\T} (I_n - U_0 E_0^{-1} Y_0 V_0^{\T}) 
	\equiv V_0 F_0^{-1} V_0^{\T}.
\end{align*}
With $U_1 := A_0 U_0$, $V_1 := A_0^{\T} V_0$, $M_1^{A} := 0 \oplus K_0$, 
$M_1^{G} := I_m \oplus E_0^{-1}$, $M_1^{H} := I_l \oplus F_0^{-1}$, $\widehat{U}_1 := [U_0, U_1]$ and $\widehat{V}_1:=[V_0,V_1]$, some simple calculations produce  
\begin{align*}
	A_1 &= A_0 (I_n + G_0 H_0)^{-1} A_0 
	= A_0 (I_n - U_0 K_0 V_0^{\T}) A_0 
	\equiv A_0^2 -	\widehat{U}_1
	M_1^{A}\widehat{V}_1^{\T}, 
	\\ 
	G_1 &= G_0 + A_0 (I_n + G_0 H_0)^{-1} G_0 A_0^{\T}  
	= U_0 U_0^{\T} + A_0 (U_0 E_0^{-1} U_0^{\T}) A_0^{\T} 
	\equiv \widehat{U}_1 M_1^{G} \widehat{U}_1^{\T}, 
	\\  
	H_1 &= H_0 + A_0^{\T} H_0 (I_n+G_0 H_0)^{-1} A_0^{\T} 
	= V_0 V_0^{\T} + A_0^{\T} (V_0 F_0^{-1} V_0^{\T}) A_0^{\T} 
	\equiv \widehat{V}_1 M_1^{H} \widehat{V}_1^{\T}. 
\end{align*}
Moreover, with  
$Y_1 := \begin{bmatrix}
	0&0\\0&Y_0
\end{bmatrix}
\in \mathbb{R}^{2m\times 2l}$,
it is easy to see that 
\begin{equation}\label{eq:M1}
	M_1^A= M_1^G Y_1= Y_1M_1^H, \ \   
	(M_1^G)^{-1}=I_{2m}+Y_1 Y_1^{\T}, \ \ 
	(M_1^H)^{-1}=I_{2l}+Y_1^{\T} Y_1,
\end{equation}
implying that
\begin{align*}
	A_1 &= A_0^2-\widehat{U}_1
	\left(I_{2m}+Y_1 Y_1^{\T}\right)^{-1} Y_1 
	\widehat{V}_1^{\T},
	\\ 
	G_1 &= \widehat{U}_1 
	\left(I_{2m}+Y_1 Y_1^{\T}\right)^{-1}
	\widehat{U}_1^{\T}, 
	\qquad 
	H_1 =\widehat{V}_1 
	\left(I_{2l}+ Y_1^{\T} Y_1\right)^{-1}
	\widehat{V}_1^{\T}. 
\end{align*}

\subsubsection{New formulation for the second step}\label{sssec:new-formulation-for-the-second-step}

For the $2$nd iteration, with  
\begin{align*}
	T_1 := \widehat{U}_1^{\T} \widehat{V}_1, & \qquad 
	K_1 := \left( I_{2m} + M_1^G T_1 M_1^H T_1^{\T} \right)^{-1} M_1^G T_1 M_1^H,
\end{align*}  
and $I_n+G_1 H_1$ being nonsingular, then by the SMWF~\eqref{eq:smwf}  we deduce that 
\begin{align*}
	&(I_n+G_1 H_1)^{-1} 
	= 
	\left( I_n + \widehat{U}_1 	M_1^G   
		\widehat{U}_1^{\T} \widehat{V}_1
		M_1^H   \widehat{V}_1^{\T}  
	\right)^{-1} 
	\\
	=& 
	I_n - \widehat{U}_1	\left( I_{2m} + M_1^G  T_1 M_1^H  T_1^{\T} \right)^{-1} 
	M_1^G  T_1 M_1^H  \widehat{V}_1^{\T}  
	\equiv  
	I_n - \widehat{U}_1
	K_1 
	\widehat{V}_1^{\T}. 
\end{align*}
Define  $E_1 :=  (M_1^G)^{-1} + T_1 M_1^H  T_1^{\T} $ and $F_1 :=  (M_1^H)^{-1}+ T_1^{\T}  M_1^G  T_1$, then manipulations produce 
\begin{align*}
	& (I_n+G_1 H_1)^{-1} G_1 
	= 	
	\left( I_n - \widehat{U}_1 	K_1 
		\widehat{V}_1^{\T} 
	\right) \widehat{U}_1
	M_1^G  
	\widehat{U}_1^{\T} 
	\\
	=& 
	\widehat{U}_1 \left\{ M_1^G  - \left( I_{2m} + M_1^G  T_1 M_1^H  T_1^{\T} \right)^{-1} 
	M_1^G  T_1 M_1^H  T_1^{\T} M_1^G \right\} \widehat{U}_1^{\T} 
	\\
	=&
	\widehat{U}_1
	\left\{ I_{2m}  - \left( I_{2m} + M_1^G  T_1 M_1^H  T_1^{\T} \right)^{-1} 
	M_1^G  T_1 M_1^H  T_1^{\T}  \right\}
	M_1^G \widehat{U}_1^{\T} 
	\equiv 	
	\widehat{U}_1 E_1^{-1} \widehat{U}_1^{\T},	
\end{align*} 
and 
\begin{align*}
	& H_1 (I_n+G_1 H_1)^{-1}     
	=
	\widehat{V}_1 
	M_1^H  \widehat{V}_1^{\T}   
	\left\{ I_n -  \widehat{U}_1 K_1 \widehat{V}_1^{\T} \right\} 
	\\
	=&
	\widehat{V}_1 M_1^H  \left\{I_{2l} - T_1^{\T} \left(I_{2m} + M_1^G  T_1 M_1^H  T_1^{\T} \right)^{-1} 
	M_1^G  T_1 M_1^H  \right\}  \widehat{V}_1^{\T} 
	\equiv 
	\widehat{V}_1 F_1^{-1} \widehat{V}_1^{\T}. 
\end{align*}
By denoting $U_2:= A_0^2 U_0$, $U_3:= A_0^3 U_0$, $V_2:= (A_0^2)^{\T} V_0$, $V_3:= (A_0^3)^{\T} V_0$, $\widehat{U}_2 := [U_0, U_1, U_2, U_3]$ and $\widehat{V}_2 :=[V_0, V_1, V_2, V_3]$,  we obtain 
\begin{align*}
	A_2 &= A_1 (I_n + G_1 H_1)^{-1} A_1 
	= \left\{ A_0^2 - \widehat{U}_1 M_1^A  \widehat{V}_1^{\T} \right\} 
	\left\{ I_n - \widehat{U}_1 K_1 \widehat{V}_1^{\T} \right\} 
	\left\{ A_0^2 - \widehat{U}_1 M_1^A  \widehat{V}_1^{\T} \right\} 
	\\
	&= A_0^4 - 
	\widehat{U}_2 
	\begin{bmatrix} -M_1^A  T_1^{\T} (I_{2m} - K_1 T_1^{\T}) M_1^A  & M_1^A  (I_{2l} - T_1^{\T} K_1)  
	\\[8pt]  (I_{2m} - K_1 T_1^{\T}) M_1^A  & K_1 \end{bmatrix} 
	\widehat{V}_2^{\T} 
	\\
	&= A_0^4 - \widehat{U}_2 \begin{bmatrix}I_{2m}&-M_1^A T_1^{\T}\\[5pt]  0&I_{2m}\end{bmatrix}
	\begin{bmatrix}M_1^A T_1^{\T} M_1^A &M_1^A \\[5pt]  M_1^A &K_1\end{bmatrix}
	\begin{bmatrix}I_{2l}&0\\[5pt] -T_1^{\T} M_1^A  &I_{2l}\end{bmatrix}
	\widehat{V}_2^{\T} 	
	:= A_0^4 -\widehat{U}_2 M_2^A \widehat{V}_2^{\T}, 
\end{align*}
\begin{align*}
	G_2 
	&=  
	G_1 + A_1 (I_n+G_1 H_1)^{-1} G_1 A_1^{\T} 
	\\
	&= 
	\widehat{U}_1 
	M_1^G  \widehat{U}_1^{\T} 
	+ \left\{ A_0^2 - \widehat{U}_1  M_1^A  \widehat{V}_1^{\T} \right\} 
	\widehat{U}_1 E_1^{-1} \widehat{U}_1^{\T}  
	\left\{ {(A_0^{2})}^{\T} - 
		\widehat{V}_1 
		(M_1^A)^{\T} \widehat{U}_1^{\T} 
	\right\} 
	\\
	&=  
	\widehat{U}_2 
	\begin{bmatrix} M_1^G  + M_1^A  T_1^{\T} E_1^{-1} T_1 (M_1^A)^{\T} & -M_1^A  T_1^{\T} E_1^{-1} \\[8pt] -E_1^{-1} T_1 (M_1^A)^{\T} & E_1^{-1} \end{bmatrix}
	\widehat{U}_2^{\T} 
	\\
	&=\widehat{U}_2 
	\begin{bmatrix} I_{2m} & -M_1^A T_1^{{\T}}\\[5pt] 0&I_{2m}\end{bmatrix}
	\begin{bmatrix} M_1^G  &0 \\[5pt] 0 & E_1^{-1} \end{bmatrix}
	\begin{bmatrix}I_{2m}&0\\[5pt]  -T_1(M_1^A)^{\T} &I_{2m}\end{bmatrix}
	\widehat{U}_2^{\T}
	:= \widehat{U}_2 M_2^G \widehat{U}_2^{\T},
\end{align*}			
and 
\begin{align*}
	H_2 
	&= 
	H_1 + A_1^{\T} H_1 (I_n+G_1 H_1)^{-1} A_1 
	\\
	&=  
	\widehat{V}_1 	M_1^H \widehat{V}_1^{\T} 
	+	\left\{ {(A_0^{2})}^{\T} - \widehat{V}_1 (M_1^A)^{\T} \widehat{U}_1^{\T} \right\}
	\widehat{V}_1 F_1^{-1} \widehat{V}_1^{\T} 
	\left\{ A_0^2 - 
		\widehat{U}_1 
		M_1^A  \widehat{V}_1^{\T} 
	\right\} 
	\\
	&= 
	\widehat{V}_2 	\begin{bmatrix} M_1^H  + (M_1^A)^{\T} T_1F_1^{-1} T_1^{\T} M_1^A  & -(M_1^A)^{\T} T_1F_1^{-1} 
	\\[8pt] -F_1^{-1} T_1^{\T} M_1^A  &F_1^{-1} \end{bmatrix}
	\widehat{V}_2^{\T} 
	\\ 
	&= \widehat{V}_2 
	\begin{bmatrix}I_{2l}&-(M_1^A)^{\T} T_1\\[5pt] 0&I_{2l}\end{bmatrix}
	\begin{bmatrix} M_1^H  & 0 \\[5pt]  0&F_1^{-1}\end{bmatrix} 
	\begin{bmatrix}I_{2l}&0\\[5pt] -T_1^{\T} M_1^A &I_{2l} \end{bmatrix}
	\widehat{V}_2^{\T} 
	:= 
	\widehat{V}_2 	M_2^H \widehat{V}_2^{\T}. 
\end{align*}
By \eqref{eq:M1}, and  the definitions of $E_1, F_1$ and $K_1$, 
we then have 
\begin{align*}
	(M_2^G)^{-1}M_2^A&=
	\begin{bmatrix} I_{2m} &0\\[5pt] T_1(M_1^A)^{{\T}}&I_{2m} \end{bmatrix}
	\begin{bmatrix}(M_1^G)^{-1} &\\[5pt] &E_1\end{bmatrix}
	\begin{bmatrix}M_1^AT_1^{\T} M_1^A&M_1^A\\[5pt] M_1^A&K_1\end{bmatrix}
	\begin{bmatrix}I_{2l}&0\\[5pt] -T_1^{\T} M_1^A &I_{2l}\end{bmatrix}
	\\
	&\equiv\begin{bmatrix}
		0&Y_1\\Y_1&T_1
	\end{bmatrix} := Y_2\in\mathbb{R}^{4m\times 4l},
\end{align*}
and 
\begin{align*}
	M_2^A(M_2^H)^{-1}&=
	\begin{bmatrix} I_{2m} &-M_1^A T_1^{\T}\\[5pt] 0&I_{2m}\end{bmatrix}
	\begin{bmatrix}M_1^A T_1^{\T} M_1^A&M_1^A\\[5pt] M_1^A&K_1 \end{bmatrix}
	\begin{bmatrix}(M_1^H)^{-1} &\\[5pt] &F_1 \end{bmatrix}
	\begin{bmatrix}I_{2l}&(M_1^A)^{\T} T_1\\[5pt] 0 &I_{2l}\end{bmatrix}\equiv Y_2,
\end{align*}
implying that $M_2^A=M_2^G Y_2 = Y_2M_2^H$.
Furthermore, it follows from \eqref{eq:M1}, the definition of $E_1$ and $M_1^H + Y_1^{\T} M_1^A = I_{2l}$ 
that 
\begin{align*}
	(M_2^G)^{-1}& = \begin{bmatrix}
		I_{2m} & 0\\[5pt]  T_1(M_1^A)^{\T} &I_{2m}
	\end{bmatrix}
	\begin{bmatrix}
		(M_1^G)^{-1}&0\\[5pt] 0& E_1
	\end{bmatrix}
	\begin{bmatrix}
		I_{2m}&M_1^AT_1^{\T} \\[5pt] 0 &I_{2m}
	\end{bmatrix}
	=
	\begin{bmatrix}
		(M_1^G)^{-1} & Y_1 T_1^{\T} \\[5pt] T_1 Y_1^{\T} & E_1+T_1Y_1^{\T} M_1^A T_1^{\T}
	\end{bmatrix}\\
	&=\begin{bmatrix}
		(M_1^G)^{-1}&Y_1T_1^{\T} \\[5pt]  T_1 Y_1^{\T} & (M_1^G)^{-1} + T_1 (M_1^H + Y_1^{\T} M_1^A)T_1^{\T} 
	\end{bmatrix}
	=\begin{bmatrix}
		(M_1^G)^{-1} & Y_1 T_1^{\T}\\[5pt]  T_1Y_1^{\T} &(M_1^G)^{-1}+T_1 T_1^{\T}
	\end{bmatrix},
\end{align*}
indicating that 
\[
	(M_2^G)^{-1} - Y_2Y_2^{\T} = \begin{bmatrix}
		(M_1^G)^{-1}-Y_1 Y_1^{\T} & 0\\ 0 &(M_1^G)^{-1} - Y_1 Y_1^{\T}
	\end{bmatrix}\equiv I_{4m}.
\]
Similarly, $(M_2^H)^{-1}-Y_2^{\T} Y_2=I_{4l}$.
Consequently, we have the following result.

\begin{lemma}\label{lm:DAREiteration2}
For the SDA in \eqref{eq:sda}, with $I_n + G_1 H_1$ being nonsingular, we have the following decoupled  forms: 
\begin{align*}
	A_2=A_0^4-\widehat{U}_2 
	(I_{4m}+Y_2Y_2^{\T})^{-1}Y_2
	\widehat{V}_2^{\T}, \ \ 
	G_2= \widehat{U}_2 (I_{4m}+Y_2Y_2^{\T})^{-1}\widehat{U}_2^{\T}, \ \ 
	H_2&= \widehat{V}_2 (I_{4l}+Y_2^{\T} Y_2)^{-1} \widehat{V}_2^{\T}.
\end{align*}
\end{lemma}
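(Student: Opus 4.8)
Almost all of the work is already in hand from the computations of the two preceding subsections; what remains is to package it. First I would record the three block representations obtained there, $A_2 = A_0^4 - \widehat{U}_2 M_2^A \widehat{V}_2^{\T}$, $G_2 = \widehat{U}_2 M_2^G \widehat{U}_2^{\T}$ and $H_2 = \widehat{V}_2 M_2^H \widehat{V}_2^{\T}$, with the enlarged bases $\widehat{U}_2 = [U_0,U_1,U_2,U_3]$ and $\widehat{V}_2 = [V_0,V_1,V_2,V_3]$ (recall $U_j = A_0^j U_0$, $V_j = (A_0^j)^{\T} V_0$), and the block-unit-triangular $\times$ block-diagonal $\times$ block-unit-triangular factorizations of $M_2^A$, $M_2^G$, $M_2^H$ displayed above. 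These follow by substituting the first-step formulas for $A_1$, $G_1$, $H_1$ together with the SMWF identity $(I_n+G_1H_1)^{-1} = I_n - \widehat{U}_1 K_1 \widehat{V}_1^{\T}$ and its companions into the SDA recursions~\eqref{eq:sda} at $k=1$, then absorbing the surviving $A_0^2$-prefactors into $\widehat{U}_2$, $\widehat{V}_2$.

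The lemma then follows from three identities for the inner matrices, which I would verify directly from the factorizations:
\[
M_2^A = M_2^G Y_2 = Y_2 M_2^H, \qquad (M_2^G)^{-1} = I_{4m}+Y_2Y_2^{\T}, \qquad (M_2^H)^{-1} = I_{4l}+Y_2^{\T}Y_2 ,
\]
with $Y_2 = \begin{bmatrix} 0 & Y_1 \\ Y_1 & T_1\end{bmatrix}$. For the first, multiply out $(M_2^G)^{-1}M_2^A$ and $M_2^A(M_2^H)^{-1}$ in triangular form and watch them both collapse to $Y_2$, using $M_1^A = M_1^G Y_1 = Y_1 M_1^H$ and the definitions $E_1 = (M_1^G)^{-1}+T_1M_1^HT_1^{\T}$, $F_1 = (M_1^H)^{-1}+T_1^{\T}M_1^GT_1$ and $K_1$. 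For the second, expand $(M_2^G)^{-1}$ from its factorization and simplify the $(2,2)$-block via $E_1 + T_1Y_1^{\T}M_1^AT_1^{\T} = (M_1^G)^{-1} + T_1(M_1^H + Y_1^{\T}M_1^A)T_1^{\T} = (M_1^G)^{-1} + T_1 T_1^{\T}$, where the last step uses the first-step relation $M_1^H + Y_1^{\T}M_1^A = I_{2l}$ (itself a consequence of~\eqref{eq:M1}); comparing with $Y_2Y_2^{\T}$ then shows $(M_2^G)^{-1} - Y_2Y_2^{\T}$ is block-diagonal with both blocks equal to $(M_1^G)^{-1} - Y_1Y_1^{\T} = I_{2m}$, i.e.\ $(M_2^G)^{-1} = I_{4m}+Y_2Y_2^{\T}$. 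The computation for $(M_2^H)^{-1}$ is the mirror image. Finally I would substitute $M_2^G = (I_{4m}+Y_2Y_2^{\T})^{-1}$, $M_2^H = (I_{4l}+Y_2^{\T}Y_2)^{-1}$ and $M_2^A = M_2^G Y_2 = (I_{4m}+Y_2Y_2^{\T})^{-1}Y_2$ back into the three block representations, which yields exactly the asserted decoupled forms.

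The only real obstacle is the block-matrix bookkeeping: one has to keep the transposes and signs in $M_2^A$, $M_2^G$, $M_2^H$ straight so that the off-diagonal blocks $Y_1T_1^{\T}$, $T_1Y_1^{\T}$ of $(M_2^G)^{-1}$ match those of $Y_2Y_2^{\T}$ and the $(2,2)$-block really collapses to $(M_1^G)^{-1}+T_1T_1^{\T}$; a single sign slip or mistransposition anywhere breaks the telescoping. No new idea beyond the SMWF~\eqref{eq:smwf} and the first-step identities~\eqref{eq:M1} is required.
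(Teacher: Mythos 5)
Your proposal is correct and follows essentially the same route as the paper: it reuses the second-step block representations $A_2=A_0^4-\widehat{U}_2M_2^A\widehat{V}_2^{\T}$, $G_2=\widehat{U}_2M_2^G\widehat{U}_2^{\T}$, $H_2=\widehat{V}_2M_2^H\widehat{V}_2^{\T}$ with their triangular--diagonal--triangular factorizations, and then verifies $M_2^A=M_2^GY_2=Y_2M_2^H$, $(M_2^G)^{-1}=I_{4m}+Y_2Y_2^{\T}$ and $(M_2^H)^{-1}=I_{4l}+Y_2^{\T}Y_2$ exactly as in Section~\ref{sssec:new-formulation-for-the-second-step}, including the key simplification via $M_1^H+Y_1^{\T}M_1^A=I_{2l}$ and $(M_1^G)^{-1}-Y_1Y_1^{\T}=I_{2m}$ from \eqref{eq:M1}. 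No substantive difference from the paper's argument.
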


\subsubsection{Decoupled recursions for DAREs}\label{sssec:decoupled-recursions-for-dares}

Similarly and recursively, we have the following result for $A_k, G_k$ and  $H_k$.

\begin{theorem}[Decoupled formulae  of the dSDA for DAREs]\label{thm:DAREiterationk}
	Let $U_j:= A_0 U_{j-1}$ ($U_0 = B$) and $V_j := A_0^{\T} V_{j-1}$ ($V_0 = C^{\T}$)  for $j \geq 1$. 
	Assume that $I_n + G_k H_k$ are nonsingular for $k \geq 0$. 
	For all $k\ge 2$, the SDA \eqref{eq:sda} produces 
		\begin{align*}
			&A_k=A_0^{2^k}-
			\widehat{U}_k 			
			\left(I_{2^km}+Y_kY_k^{\T}\right)^{-1}Y_k
			\widehat{V}_k^{\T},   
			\\
			& G_k= \widehat{U}_k (I_{2^k m}+Y_kY_k^{\T})^{-1} \widehat{U}_k^{\T},  \qquad \qquad 
			H_k= \widehat{V}_k (I_{2^k l}+Y_k^{\T} Y_k)^{-1} \widehat{V}_k^{\T}, 
		\end{align*}
		where $Y_k := \begin{bmatrix}
			0&Y_{k-1}\\Y_{k-1}&T_{k-1}
		\end{bmatrix}\in\mathbb{R}^{2^km\times2^kl}$, 
		$\widehat{U}_k := [U_0,U_1,\cdots ,U_{2^k-1}]$, 
		$\widehat{V}_k := [V_0,V_1,\cdots ,V_{2^k-1}]$ and 
	$T_{k-1} := \widehat{U}_{k-1}^{\T} \widehat{V}_{k-1}$ with  
	$Y_1 = \begin{bmatrix} 0 & 0 \\ 0 & B^{\T} C^{\T} \end{bmatrix}\in \mathbb{R}^{2m\times 2l}$.  
\end{theorem}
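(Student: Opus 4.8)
The plan is to prove the theorem by induction on $k$, using the pattern already established for $k=1$ and $k=2$ as the base cases (Lemma~\ref{lm:DAREiteration2} gives $k=2$; the computations in Section~\ref{sssec:new-formulation-for-the-first-step} give $k=1$). The inductive step mirrors the passage from step~1 to step~2 verbatim, so the main task is to set up notation that makes ``and so on'' rigorous. Specifically, I would carry along the stronger inductive hypothesis that, in addition to the three displayed formulae, the auxiliary matrices satisfy the three identities
\begin{align*}
	M_k^A = M_k^G Y_k = Y_k M_k^H, \qquad (M_k^G)^{-1} = I_{2^k m} + Y_k Y_k^{\T}, \qquad (M_k^H)^{-1} = I_{2^k l} + Y_k^{\T} Y_k,
\end{align*}
where $M_k^A, M_k^G, M_k^H$ are the core matrices with $A_k = A_0^{2^k} - \widehat U_k M_k^A \widehat V_k^{\T}$, $G_k = \widehat U_k M_k^G \widehat U_k^{\T}$, $H_k = \widehat V_k M_k^H \widehat V_k^{\T}$. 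These are exactly the relations \eqref{eq:M1} at level~1 and the boxed identities at level~2, and they are what let the SMWF collapse so cleanly. With this strengthened hypothesis the displayed formulae of the theorem are just a rewriting, so it suffices to propagate the three identities.

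For the inductive step I would assume the level-$k$ statement and derive level $k+1$. First, set $T_k := \widehat U_k^{\T} \widehat V_k$ and, exactly as in Section~\ref{sssec:new-formulation-for-the-second-step}, apply the SMWF~\eqref{eq:smwf} to $(I_n + G_k H_k)^{-1} = (I_n + \widehat U_k M_k^G \widehat U_k^{\T} \widehat V_k M_k^H \widehat V_k^{\T})^{-1}$, using the assumed nonsingularity of $I_n + G_k H_k$, to get $(I_n + G_k H_k)^{-1} = I_n - \widehat U_k K_k \widehat V_k^{\T}$ with $K_k := (I_{2^k m} + M_k^G T_k M_k^H T_k^{\T})^{-1} M_k^G T_k M_k^H$, and similarly $(I_n + G_k H_k)^{-1} G_k = \widehat U_k E_k^{-1} \widehat U_k^{\T}$, $H_k (I_n + G_k H_k)^{-1} = \widehat V_k F_k^{-1} \widehat V_k^{\T}$ with $E_k := (M_k^G)^{-1} + T_k M_k^H T_k^{\T}$ and $F_k := (M_k^H)^{-1} + T_k^{\T} M_k^G T_k$. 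Then substitute into \eqref{eq:sda}, observe that because $U_{2^{k+1}-1} = A_0^{2^{k+1}-1}U_0$ etc.\ the columns of $A_0^{2^k}\widehat U_k$ together with those of $\widehat U_k$ are exactly $\widehat U_{k+1}$ (and likewise for $\widehat V$), factor the resulting $2\times 2$ block core matrices into unit-triangular $\times$ block-diagonal $\times$ unit-triangular form, and read off $M_{k+1}^A, M_{k+1}^G, M_{k+1}^H$. Finally, reproduce the three boxed computations of Section~\ref{sssec:new-formulation-for-the-second-step} at level $k$: compute $(M_{k+1}^G)^{-1} M_{k+1}^A$ and $M_{k+1}^A (M_{k+1}^H)^{-1}$ and simplify both to $Y_{k+1} = \left[\begin{smallmatrix} 0 & Y_k \\ Y_k & T_k\end{smallmatrix}\right]$ using $M_k^H + Y_k^{\T} M_k^A = I$ (a consequence of the level-$k$ identities), which gives $M_{k+1}^A = M_{k+1}^G Y_{k+1} = Y_{k+1} M_{k+1}^H$; then expand $(M_{k+1}^G)^{-1}$ in block form and use $(M_k^G)^{-1} - Y_k Y_k^{\T} = I$ to obtain $(M_{k+1}^G)^{-1} - Y_{k+1} Y_{k+1}^{\T} = I_{2^{k+1}m}$, and symmetrically for $H$.

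The main obstacle, such as it is, is bookkeeping rather than mathematics: one must be careful that the index ranges of $\widehat U_k$ and $\widehat V_k$ ($U_0,\dots,U_{2^k-1}$) are consistent with the block doubling, i.e.\ that stacking $\widehat U_k$ on $A_0^{2^k}\widehat U_k$ and then permuting columns really yields $\widehat U_{k+1}$, and that the block partition of $T_{k+1}$ induced by this permutation is the one for which the triangular factorizations go through; one should also confirm the two ``hidden'' scalar-type identities $M_k^H + Y_k^{\T} M_k^A = I$ and $M_k^G + M_k^A Y_k^{\T} = I$ follow from the inductive hypothesis, since they are used in the simplification of $(M_{k+1}^G)^{-1}$ and $(M_{k+1}^H)^{-1}$. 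I would streamline the write-up by noting explicitly that every algebraic manipulation needed at level $k$ is formally identical to the one already displayed at level~$1\to 2$ with $M_1^{\bullet}, Y_1, T_1$ replaced by $M_k^{\bullet}, Y_k, T_k$, so that the routine calculations need not be repeated in full.
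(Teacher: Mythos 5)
Your proposal is correct and follows essentially the same route as the paper: induction on $k$ with Lemma~\ref{lm:DAREiteration2} as the base case, repeating the level-$1\to 2$ SMWF computation and the unit-triangular $\times$ block-diagonal $\times$ unit-triangular factorization at level $k$, the only cosmetic difference being that the paper verifies the single congruence identity
$\bigl[\begin{smallmatrix} I & 0 \\ Z_1^{\T} & I\end{smallmatrix}\bigr]
\bigl[\begin{smallmatrix} I+Y_jY_j^{\T} & 0 \\ 0 & K\end{smallmatrix}\bigr]
\bigl[\begin{smallmatrix} I & Z_1 \\ 0 & I\end{smallmatrix}\bigr]=I+Y_{j+1}Y_{j+1}^{\T}$
directly, rather than propagating the auxiliary identities $(M_k^G)^{-1}=I+Y_kY_k^{\T}$, $M_k^A=M_k^GY_k=Y_kM_k^H$ as a strengthened hypothesis. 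One small simplification of your bookkeeping worry: no column permutation is needed, since $U_{2^k+i}=A_0^{2^k}U_i$ gives $\widehat U_{k+1}=[\widehat U_k,\ A_0^{2^k}\widehat U_k]$ exactly (and likewise for $\widehat V_{k+1}$).
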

\begin{proof}
We will prove the result by induction. 	By Lemma~\ref{lm:DAREiteration2} the result is valid  for $k=2$. Now assume that the result holds for $j>2$, then by the SMWF~\eqref{eq:smwf} we have 
	\begin{align*}
		(I_n +G_j H_j)^{-1} &= I_n -\widehat{U}_{j} 
		K^{-1} T_j N \widehat{V}_{j}^{\T},
		\\
		(I_n +G_j H_j)^{-1} G_j &= \widehat{U}_{j} K^{-1}  \widehat{U}_{j}^{\T},
		\qquad \qquad 
		H_j(I_n +G_j H_j)^{-1}=  \widehat{V}_{j} L^{-1}  \widehat{V}_{j}^{\T},
	\end{align*}
	where $M=\left(I_{2^j m}+Y_jY_j^{\T}\right)^{-1}$, $N=\left(I_{2^j l}+Y_j^{\T} Y_j\right)^{-1}$,   $K=I_{2^j m} + Y_j Y_j^{\T} + T_j N T_j^{\T}$ and  $L =  I_{2^jl}+Y_j^{\T} Y_j + T_j^{\T} M T_j$. 
Define   $Z_1:= \left(I_{2^j m}+Y_jY_j^{\T}\right)^{-1} Y_j T_j^{\T}$ and 	$Z_2 := \left(I_{2^j l}+Y_j^{\T} Y_j\right)^{-1} Y_j^{\T} T_j$, then  by  \eqref{eq:sda} it holds that  
\begin{align*}
	A_{j+1}
	&=
	\begin{multlined}[t]
		A_0^{2^{j+1}} - 
		\widehat{U}_{j+1}
		\begin{bmatrix}
			I_{2^j m}& -Z_1  \\[5pt]  0 & I_{2^j m}
		\end{bmatrix}
		\begin{bmatrix}
			Z_1 M Y_j 
			& M Y_j \\[5pt]
			M Y_j
			&K^{-1} T_j N 
		\end{bmatrix} 
		\begin{bmatrix}
			I_{2^jl} & 0 \\[5pt]  -Z_2^{\T} & I_{2^jl}
		\end{bmatrix}	
		\widehat{V}_{j+1}^{\T}, 
	\end{multlined}
	\\
	G_{j+1}    
	&=
	\widehat{U}_{j+1}	\begin{bmatrix}
		M + Z_1 K^{-1} Z_1^{\T}		& -Z_1 K^{-1}  
		\\[5pt] 
		-K^{-1} Z_1^{\T} 
		& K^{-1}
	\end{bmatrix}		\widehat{U}_{j+1}^{\T}
	\\
	&=
	\widehat{U}_{j+1}
	\begin{bmatrix}
		I_{2^jm}&-Z_1\\[5pt] 0&I_{2^j m}
	\end{bmatrix}
	\begin{bmatrix}
		M  &0
		\\[5pt]
		0& K^{-1}
	\end{bmatrix}
	\begin{bmatrix}
		I_{2^j m}&0\\[5pt] -Z_1^{\T} &I_{2^j m}
	\end{bmatrix}
	\widehat{U}_{j+1}^{\T}, 
\end{align*} 
and 
\begin{align*}
	H_{j+1}
	&=
	\widehat{V}_{j+1}
	\begin{bmatrix}
		N  + Z_2 L^{-1} Z_2^{\T}	& -Z_2L^{-1}\\[5pt] 
		-L^{-1} Z_2^{\T} & L^{-1}
	\end{bmatrix}
	\widehat{V}_{j+1}^{\T},
	\\
	&=
	\widehat{V}_{j+1}
	\begin{bmatrix}
		I_{2^j l}&-Z_2\\[5pt] 0&I_{2^j l}
	\end{bmatrix}
	\begin{bmatrix}
		N &0\\[5pt] 0&L^{-1}
	\end{bmatrix}
	\begin{bmatrix}
		I_{2^j l}&0\\[5pt] -Z_2^{\T}&I_{2^j l}
	\end{bmatrix}\widehat{V}_{j+1}^{\T}. 
\end{align*}

Define  $Y_{j+1} := \begin{bmatrix}
	0&Y_j\\[5pt] Y_j&T_{j}
	\end{bmatrix}\in \mathbb{R}^{2^{j+1}m \times 2^{j+1}l}$, 
then we have  
\[
	\begin{bmatrix}
		I_{2^jm} & 0\\[5pt] Z_1^{\T} & I_{2^jm}
	\end{bmatrix}
	\begin{bmatrix}
		I_{2^j m}+Y_jY_j^{\T} &0\\[5pt]0&K
	\end{bmatrix}
	\begin{bmatrix}
		I_{2^jm} & Z_1\\[5pt] 0 & I_{2^jm}
	\end{bmatrix}
	\equiv I_{2^{j+1}m}+Y_{j+1}Y_{j+1}^{\T},
\]
leading to 
	$G_{j+1}=
	\widehat{U}_{j+1}		\left(I_{2^{j+1}m}+Y_{j+1}Y_{j+1}^{\T}\right)^{-1}
	\widehat{U}_{j+1}^{\T}$. 
	Similarly, we can verify analogous formulae for $A_{j+1}$ and $H_{j+1}$. The proof by induction is complete. 
\end{proof}

\begin{remark} \label{rem1}
	Theorem~\ref{thm:DAREiterationk} decouples the original SDA \eqref{eq:sda}, implying that only the iteration for $H_k$ is required for the solution of the DARE \eqref{eq:dare}. 
	This eliminates the difficulty in \cite{liCLW2013solving}, in which the implicit recursion in $A_k$ increases the flop counts exponentially.  
	The decoupled formulae are simple and elegant, with the updating recursion for $Y_k$ nontrivial. The resulting dSDA is obviously equivalent to a projection method, with the corresponding Krylov subspace $\mathcal{K}_{2^k-1} (A^{\T}, C^{\T}) \equiv [C^{\T}, A^{\T} C^{\T}, \cdots, (A^{\T})^{2^k-1} C^{\T}]$ and the coefficient matrix $(I+Y_k^{\T} Y_k)^{-1}$ being the solution of the corresponding projected equation. Note that the SDA (and the equivalent dSDA) has been proved to converge \cite{linX2006convergence}, assuming that $I_n + G_k H_k$ are nonsingular for $k \geq 0$. In contrast, the projected equations in Krylov subspace methods are routinely assumed to be solvable \cite{heyouniJ2009extended,jbilou2003block,jbilou2006arnoldi,simoncini2016analysis,simonciniSM2014two}.  However, with round-off errors, the Krylov subspaces may lose linear independence, requiring a remedy in a truncation process. Also, near convergence, new additions to the Krylov subspaces play lesser parts in the approximate solution, implying that the coefficient matrix $(I+Y_k^{\T} Y_k)^{-1}$ has relatively smaller components in the lower right corner. Without truncation, the ill-conditioned coefficient matrix, as an inverse, will be difficult to compute as $k$ increases. Limited by the article space, many practical compute issues will be treated in another paper, where we shall develop a novel truncation technique. 
\end{remark}

\begin{remark}\label{rk:complex-conjugate}
	For the complex DARE: 
	\begin{align*}
		-X + A^{\HH} X (I + GX)^{-1} A + H = 0, 
	\end{align*}
	the above decoupled form of the  SDA proves valid, with  the   $(\cdot)^{\T}$ replaced  by $(\cdot)^{\HH}$.  Such comment applies for the results in the subsequent sections. 
\end{remark}

\subsection{dSDA for CAREs}\label{ssec:dsda-for-cares}
Note that in~\eqref{eq:starting-points-care}, the starting points for CAREs are different  from those for DAREs, and to get $A_0, G_0, H_0$ we need to compute $K_{\gamma}$ at first.  For CAREs, alternatively with $U_0 := A_\gamma^{-1} B$, $V_0 := A_\gamma^{-{\T}} C^{\T}$and $Y_0 := B^{\T} V_0$, then  
by  the SMWF~\eqref{eq:smwf} we get 
\[
	K_{\gamma}^{-1}=A_{\gamma}^{-{\T}}-V_0\left(I_l + Y_0^{\T} Y_0\right)^{-1}Y_0^{\T} U_0^{\T},
\]
leading to 
\begin{equation}\label{eq:caresda0}
	A_0 = (I_n+2\gamma A_\gamma^{-1}) - 2\gamma U_0  Y_0 F_0^{-1} V_0^{\T}, \ \ 
	G_0 = 2\gamma U_0 E_0^{-1} U_0^{\T}, \ \ 
    H_0 = 2\gamma V_0 F_0^{-1} V_0^{\T}, 
\end{equation}
where $E_0 := I_m + Y_0 Y_0^{\T}$, $F_0 := I_l + Y_0^{\T} Y_0$ 
satisfy $Y_0 F_0^{-1} = E_0^{-1} Y_0$ and $Y_0^{\T} E_0^{-1} = F_0^{-1} Y_0^{\T}$.

Defining   $T_0 := U_0^{\T} V_0$, 
$K := (E_0 + 4\gamma^2 T_0 F_0^{-1} T_0^{\T})^{-1}$ and $L := (F_0+4\gamma^2T_0^{\T} E_0^{-1} T_0)^{-1}$, the SMWF~\eqref{eq:smwf} again implies 
\begin{equation}\label{eq:curlK}
\begin{aligned}
	(I_n + G_0 H_0)^{-1} 
	&= ( I_n + 4 \gamma^2 U_0 E_0^{-1} T_0 F_0^{-1} V_0^{\T} )^{-1}=I_n-4\gamma^2U_0KT_0F_0^{-1}V_0^{\T},\\
	(I_n + G_0 H_0)^{-1} G_0 &=2\gamma ( I_n + 4 \gamma^2 U_0 E_0^{-1} T_0 F_0^{-1} V_0^{\T} )^{-1}U_0E_0^{-1}U_0^{\T}=2\gamma U_0 K  U_0^{\T},\\
	H_0 (I_n + G_0 H_0)^{-1} &=2\gamma V_0F_0^{-1}V_0^{\T}( I_n + 4 \gamma^2 U_0 E_0^{-1} T_0 F_0^{-1} V_0^{\T} )^{-1}= 2\gamma V_0 L V_0^{\T}. 
\end{aligned}
\end{equation}
Denote  $\widetilde A_{\gamma}:= ( I_n+2\gamma A_\gamma^{-1})$,  
$U_1:= \widetilde A_{\gamma} U_0,$  
$V_1:= \widetilde A_{\gamma}^{\T} V_0$ and $Y_1 := \begin{mat}{cc} 0 & Y_0 \\[3pt] Y_0 & 2\gamma T_0 \end{mat}\in \mathbb{R}^{2m\times 2l}$, with similar notations $\widehat{U}_1$ and $\widehat{V}_1$ as in the previous section (here $U_1:=\widetilde{A}_{\gamma}U_0$ and $V_1:=\widetilde{A}_{\gamma}^{\T}V_0$) and the help of \eqref{eq:sda}, ~\eqref{eq:caresda0} and \eqref{eq:curlK}, some   manipulations yield 
\begin{equation}\label{eq:G1-care}
	\begin{aligned}
	G_1 
	&=
	2\gamma
	\widehat{U}_1
	\begin{bmatrix}
		E_0^{-1} + 4\gamma^2 Y_0 F_0^{-1} T_0^{\T} K  T_0 F_0^{-1} Y_0^{\T} & -2\gamma Y_0 F_0^{-1} T_0^{\T} K\\[5pt]
		-2\gamma KT_0 F_0^{-1} Y_0^{\T} & K
	\end{bmatrix} 
	\widehat{U}_1^{\T} 
	\\ 
	&=
	2\gamma 
	\widehat{U}_1
	\begin{bmatrix}
		E_0&2\gamma Y_0T_0^{\T} \\[5pt] 2\gamma T_0Y_0^{\T} &E_0+4\gamma^2 T_0 T_0^{\T}
	\end{bmatrix}^{-1} 
	\widehat{U}_1^{\T}
	\equiv 
	2\gamma 
	\widehat{U}_1
	\left(I_{2m}+Y_1Y_1^{\T}\right)^{-1} 
	\widehat{U}_1^{\T},
\end{aligned}
\end{equation}
\begin{align*}
	H_1 
	&= 
	2\widehat{V}_1
	\begin{bmatrix}
		F_0^{-1} + 4\gamma^2 F_0^{-1} Y_0^{\T} T_0  L T_0^{\T} Y_0 F_0^{-1} & -2\gamma F_0^{-1} Y_0^{\T} T_0  L \\[5pt]
		-2\gamma  L T_0^{\T} Y_0 F_0^{-1} &  L
	\end{bmatrix}  
	\widehat{V}_1^{\T}  
	\\ 
	&= 
	2\gamma 
	\widehat{V}_1
	\begin{bmatrix}
		F_0 & 2\gamma Y_0^{\T} T_0 \\[5pt] 2\gamma T_0^{\T} Y_0 & F_0 + 4 \gamma^2 T_0^{\T} T_0
	\end{bmatrix}^{-1} 
	\widehat{V}_1^{\T}  
	\equiv 
	2\gamma 
	\widehat{V}_1
	\left(I_{2l}+Y_1^{\T} Y_1\right)^{-1}
	\widehat{V}_1^{\T}, 
\end{align*}
\begin{align*}
	A_1 
	&= 
	\widetilde A_{\gamma}^2  
	-  2\gamma 
	\widehat{U}_1
	\begin{bmatrix}
		-2\gamma Y_0 F_0^{-1}T_0^{\T} K Y_0&Y_0 L\\[5pt] KY_0&2\gamma K T_0F_0^{-1}
	\end{bmatrix}
	\widehat{V}_1^{\T}  
	\\
	&= 
	\widetilde A_{\gamma}^2 
	-  2\gamma  
	\widehat{U}_1
	Y_1 \begin{bmatrix}
		F_0^{-1} + 4\gamma^2 F_0^{-1} Y_0^{\T} T_0  L T_0^{\T} Y_0 F_0^{-1}
		& -2\gamma F_0^{-1} Y_0^{\T} T_0  L 
		\\[5pt] -2\gamma  L T_0^{\T} Y_0 F_0^{-1} &  L
	\end{bmatrix}
	\widehat{V}_1^{\T} 
	\\ 
	&\equiv
	\widetilde A_{\gamma}^2 
	-  2\gamma 
	\widehat{U}_1
	Y_1\left(I_{2l}+Y_1^{\T} Y_1\right)^{-1} 
	\widehat{V}_1^{\T} 
	\equiv 
	\widetilde A_{\gamma}^2 
	-  2\gamma
	\widehat{U}_1
	\left(I_{2m}+Y_1 Y_1^{\T}\right)^{-1}Y_1 
	\widehat{V}_1^{\T}. 
\end{align*}

Obviously, $A_1$, $G_1$ and $H_1$ are decoupled. 
Rewriting the symbols in the recursions, we subsequently get the following decoupled  result for the SDA.

\begin{theorem}[Decoupled formulae of the dSDA for CAREs]\label{thm:CAREiterationk}
	Denote $U_j := \widetilde A_{\gamma} U_{j-1}$ and  \\ $V_j := \widetilde A_{\gamma}^{\T} V_{j-1}$ for $j\geq 1$. Assume that $I_n + G_k H_k$ are nonsingular for $k \geq 0$. For all $k\ge 1$, the SDA produces  
	\begin{equation}\label{eq:care-iteration} 
		\begin{aligned}
			A_k
			&=
			\widetilde A_{\gamma}^{2^k}-2\gamma 
			\widehat{U}_k
			\left(I_{2^k m}+Y_kY_k^{\T}\right)^{-1}Y_k
			\widehat{V}_k^{\T},
			\\
			G_k
			&=
			2\gamma 
			\widehat{U}_k
			\left(I_{2^k m}+Y_k Y_k^{\T}\right)^{-1}
			\widehat{U}_k^{\T}, 
			\qquad 
			H_k
			=
			2\gamma 
			\widehat{V}_k
			\left(I_{2^k l}+Y_k^{\T} Y_k\right)^{-1}
			\widehat{V}_k^{\T},
		\end{aligned}
	\end{equation}
	where  $Y_k=\begin{bmatrix}
		0&Y_{k-1}\\[5pt]Y_{k-1}&2\gamma T_{k-1}
	\end{bmatrix}\in \mathbb{R}^{2^k m \times 2^k l}$, 
	$\widehat{U}_k := [U_0,U_1,\cdots ,U_{2^k-1}]$, 
	$\widehat{V}_k := [V_0,V_1,\cdots ,V_{2^k-1}]$ and 
	$T_k := \widehat{U}_k^{\T} \widehat{V}_k$,  
	with $U_0 = A_\gamma^{-1} B$, $V_0 = A_\gamma^{-{\T}} C^{\T}$, $Y_0 = B^{\T} A_\gamma^{-{\T}} C^{\T}$   
	and $T_0 = U_0^{\T} V_0$. 
\end{theorem}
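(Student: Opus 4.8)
The plan is to establish \eqref{eq:care-iteration} by induction on $k$, mirroring the proof of Theorem~\ref{thm:DAREiterationk}. The base case $k=1$ is exactly the content of the displayed computations of $G_1$, $H_1$, $A_1$ that precede the statement (ending with \eqref{eq:G1-care} and its two analogues): there the $2\times 2$ block middle matrices, after conjugation by suitable unit block-triangular factors, were already recognized as $2\gamma(I_{2m}+Y_1Y_1^{\T})^{-1}$, $2\gamma(I_{2l}+Y_1^{\T}Y_1)^{-1}$ and $2\gamma(I_{2m}+Y_1Y_1^{\T})^{-1}Y_1$, with $Y_1=\begin{bmatrix}0&Y_0\\ Y_0&2\gamma T_0\end{bmatrix}$. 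So only the inductive step remains.

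For the inductive step, assume \eqref{eq:care-iteration} holds for some $j\ge 1$, and abbreviate $M_j^G:=(I_{2^jm}+Y_jY_j^{\T})^{-1}$, $M_j^H:=(I_{2^jl}+Y_j^{\T}Y_j)^{-1}$, $M_j^A:=M_j^GY_j$, so that $G_j=2\gamma\widehat{U}_jM_j^G\widehat{U}_j^{\T}$, $H_j=2\gamma\widehat{V}_jM_j^H\widehat{V}_j^{\T}$ and $A_j=\widetilde A_{\gamma}^{2^j}-2\gamma\widehat{U}_jM_j^A\widehat{V}_j^{\T}$. First I would record, just as in the proof of Theorem~\ref{thm:DAREiterationk} but now carrying the scalar $4\gamma^2$ through, the three consequences of the SMWF~\eqref{eq:smwf}: with $T_j=\widehat{U}_j^{\T}\widehat{V}_j$ one has $I_n+G_jH_j=I_n+4\gamma^2\widehat{U}_jM_j^GT_jM_j^H\widehat{V}_j^{\T}$, so that $(I_n+G_jH_j)^{-1}$, $(I_n+G_jH_j)^{-1}G_j$ and $H_j(I_n+G_jH_j)^{-1}$ take the low-rank forms $I_n-\widehat{U}_j(\cdot)\widehat{V}_j^{\T}$, $2\gamma\widehat{U}_j(\cdot)\widehat{U}_j^{\T}$, $2\gamma\widehat{V}_j(\cdot)\widehat{V}_j^{\T}$, with inner factors assembled from $M_j^G$, $M_j^H$, $T_j$ and $\gamma$ (their invertibility being equivalent to the standing hypothesis that $I_n+G_jH_j$ is nonsingular). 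Then I would feed these into the three recursions \eqref{eq:sda}, using that $U_i=\widetilde A_{\gamma}^iU_0$ and $V_i=(\widetilde A_{\gamma}^{\T})^iV_0$, so $\widehat{U}_{j+1}=[\widehat{U}_j,\ \widetilde A_{\gamma}^{2^j}\widehat{U}_j]$, $\widehat{V}_{j+1}=[\widehat{V}_j,\ (\widetilde A_{\gamma}^{2^j})^{\T}\widehat{V}_j]$, together with the identities $A_j\widehat{U}_j=\widehat{U}_{j+1}\begin{bmatrix}-2\gamma M_j^AT_j^{\T}\\ I_{2^jm}\end{bmatrix}$ and $A_j^{\T}\widehat{V}_j=\widehat{V}_{j+1}\begin{bmatrix}-2\gamma(M_j^A)^{\T}T_j\\ I_{2^jl}\end{bmatrix}$. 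Collecting terms writes $A_{j+1}$, $G_{j+1}$, $H_{j+1}$ as $\widetilde A_{\gamma}^{2^{j+1}}-2\gamma\widehat{U}_{j+1}M_{j+1}^A\widehat{V}_{j+1}^{\T}$, $2\gamma\widehat{U}_{j+1}M_{j+1}^G\widehat{U}_{j+1}^{\T}$, $2\gamma\widehat{V}_{j+1}M_{j+1}^H\widehat{V}_{j+1}^{\T}$, with explicit $2\times 2$ block middle matrices, exactly as in the first- and second-step computations.

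The crux is to identify those block middle matrices with the asserted closed forms. For $G_{j+1}$ I would exhibit a block $LDU$-type factorization whose diagonal factor is $\diag((M_j^G)^{-1},\,K)$ — with $K$ the Schur-complement-type inner matrix thrown up by the SMWF step — and check that, after conjugation by the appropriate unit block-triangular factors, it becomes $I_{2^{j+1}m}+Y_{j+1}Y_{j+1}^{\T}$ with $Y_{j+1}=\begin{bmatrix}0&Y_j\\ Y_j&2\gamma T_j\end{bmatrix}$, the inductive identity $(M_j^G)^{-1}=I_{2^jm}+Y_jY_j^{\T}$ being used to absorb the block $(M_j^G)^{-1}$; inverting gives $M_{j+1}^G=(I_{2^{j+1}m}+Y_{j+1}Y_{j+1}^{\T})^{-1}$, and the symmetric computation gives $M_{j+1}^H=(I_{2^{j+1}l}+Y_{j+1}^{\T}Y_{j+1})^{-1}$. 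For $A_{j+1}$ one verifies, as in the DARE case and using the push-through identity $M_j^H+Y_j^{\T}M_j^A=I_{2^jl}$ together with $M_{j+1}^A=M_{j+1}^GY_{j+1}=Y_{j+1}M_{j+1}^H$, that its middle matrix is precisely $M_{j+1}^GY_{j+1}$, which closes the induction. I expect the only real difficulty to be bookkeeping — keeping the powers of $2\gamma$ straight (one factor gets absorbed into the $2\gamma T_j$ block of $Y_{j+1}$, the others into the outer scalar and into $K$) and choosing the correct unit block-triangular conjugators — and this is routine once the DARE derivation is followed step by step. Finally, by Remark~\ref{rk:complex-conjugate}, replacing $(\cdot)^{\T}$ throughout by $(\cdot)^{\HH}$ gives the result for the complex CARE.
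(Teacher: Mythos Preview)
Your proposal is correct and is exactly the approach the paper takes: the paper does not spell out a proof of Theorem~\ref{thm:CAREiterationk} but simply says, after establishing the $k=1$ case, that ``rewriting the symbols in the recursions, we subsequently get the following decoupled result for the SDA'' --- i.e., the induction of Theorem~\ref{thm:DAREiterationk} is to be repeated verbatim with $\widetilde A_\gamma$ in place of $A_0$ and the factor $2\gamma$ tracked through. Your outline does precisely this, and your bookkeeping of where the $2\gamma$ gets absorbed (into the $2\gamma T_j$ block of $Y_{j+1}$ and into the outer scalars) is on target.
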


As the dSDA for DAREs, the three formulae in the iteration are all decoupled. To solve CAREs while monitoring $\| H_k - H_{k-1}\|$ or the normalized residual  for  convergence control,  there is no need  to compute $A_k$ or $G_k$. Comments analogous to those in Remarks~\ref{rem1} and \ref{rk:complex-conjugate} for the dSDA for DAREs also hold for the dSDA for CAREs in Theorem~\ref{thm:CAREiterationk}, with $\widetilde{A}_\gamma$ in place of $A$. 

Theoretically, we can employ the decoupled formulae~\eqref{eq:care-iteration} to compute $H_k$ which approximates  the solution $X$ for the CAREs. However, without truncation the size of $Y_k$ will grow exponentially and the lower right corner of the  kernel $(I+Y_k Y_k^{\T})^{-1}$  will diminish fast.  Hence, how to incorporate truncation in the dSDA is  a crucial issue. We shall solve the associated problems in a companion paper, in which a practical dSDA with truncation strategy will be presented and analyzed.  For a taste of what is to come, the truncation strategy is summarized in the follow diagram.

\newcommand\red[1]{{\color{red}#1}}
\def\trunc{\text{truncation}}
\begin{figure}[h]
	\centering
	\begin{tikzcd}
		\boxed{\red{H_0}} \arrow[r, red]	&\boxed{\red{H_1}} \arrow[r] \arrow[d, red, "\trunc" red]  & H_2 \arrow[r]                                 & H_3 \arrow[r]                                 & H_4 \arrow[r]                                 & H_5 \arrow[r]             & \cdots 
		\\
		&	\boxed{\red{H_1^{(1)}}} \arrow[r, red]                & \red{H_2^{(1)}} \arrow[r] \arrow[d, red, "\trunc" red] & H_3^{(1)} \arrow[r]                           & H_4^{(1)} \arrow[r]                           & H_5^{(1)} \arrow[r]       & \cdots 
		\\
		&	                                         & \boxed{\red{H_2^{(2)}}} \arrow[r, red]                     & \red{H_3^{(2)}} \arrow[r] \arrow[d, dashed, red, "\trunc" red] & H_4^{(2)} \arrow[r]      & H_5^{(2)} \arrow[r]       & \cdots 
		\\
		&	                                         &                                               & \boxed{\red{H_k^{(k)}}} \arrow[r, red]                     & \red{H_{k+1}^{(k)}} \arrow[r] \arrow[d, red, "\trunc" red] & H_{k+2}^{(k)} \arrow[r]       & \cdots 
		\\
		&	                                         &                                               &                                               & \boxed{\red{H_{k+1}^{(k+1)}}} \arrow[r, red] & \red{H_{k+2}^{(k+1)}} \arrow[r] \arrow[d, dashed, red, "\trunc" red] & \cdots \\
		&	                                        	&&&&\quad&                                                                                
	\end{tikzcd}
	\caption{Truncation in dSDA}
	\label{fig:truncation-routine:}
\end{figure}
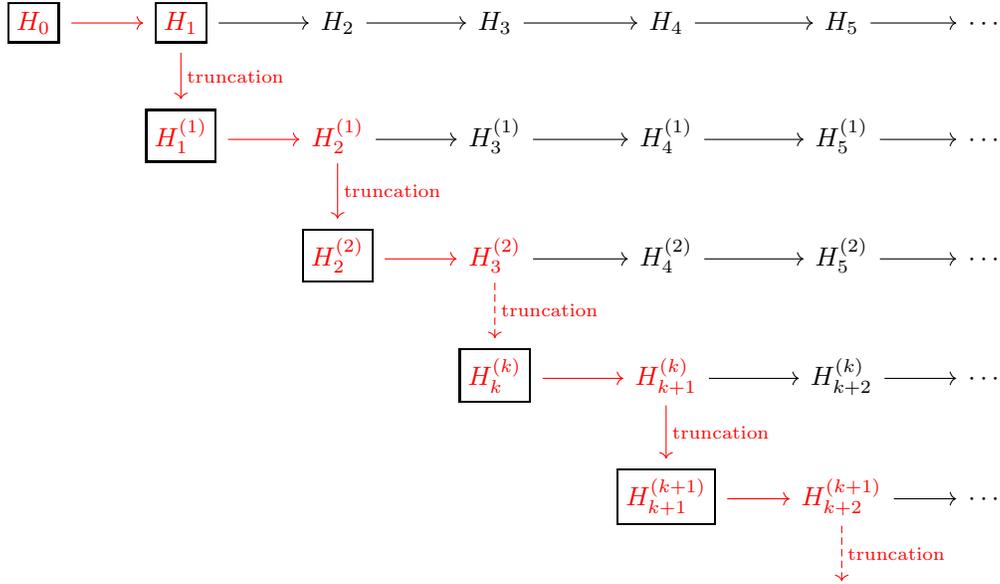

On the first row, we have $H_k$ from the dSDA without truncation. After $H_1$ is computed, it may be truncated to $H_1^{(1)}$ and then it may generate $H_k^{(1)}$ by the dSDA without truncation. In general, $H_k^{(k)}$ generates $H_j^{(k)}$ ($j>k$) by the dSDA without truncation, and $H_{k+1}^{(k)}$ is truncated to $H_{k+1}^{(k+1)}$. Notice that $H_j^{(k)}$ ($j>k$) on any row enjoy the support of the rich existing theory of the dSDA (also SDA). In the companion paper, we shall analyzed the dSDA with truncation extending the results of the dSDA (and SDA). We shall also produce the formula for the short-cut from $H_k^{(k)}$ to $H_{k+1}^{(k+1)}$, without going through $H_{k+1}^{(k)}$.

\subsection{Bethe-Salpeter eigenvalue problem}
In~\cite{guoCL2019doubling}, the SDA is extended to solve the (discretized) BSEP, a Hamiltonian-like eigenvalues problem, where only two iterative formulae are computed instead of three in CAREs. The question is whether  the proposed  dSDA can be generalized to the BSEP.  We give the results in this section.  

Consider the following discretized BSEP:
\begin{equation}  \label{eq:bsevp}
Hx\equiv
\begin{bmatrix}
 \ \ A & \ \ B \\ 
-\overline{B}  & -\overline{A}%
\end{bmatrix}x	
=\lambda x,
\end{equation}
for $x \neq 0$, where $A, B \in\mathbb{C}^{n\times n}$ satisfy
$A^{\HH}=A,\ B^{\T}=B$. For problem \eqref{eq:bsevp} any eigenvalue $\lambda$ appears in quadruplets $\{\pm \lambda, \pm \overline{\lambda}\}$ (except for the degenerate cases when $\lambda$ is purely real or imaginary) and thus shows Hamiltonian-like structure. When applying the SDA to the BSEP~\eqref{eq:bsevp}, by assuming that $I_n-\ol F_kF_k$ are nonsingular for $k \geq 0$ it generates the following iterations: 
\begin{align*}
	E_{k+1}=E_k(I_n-\ol F_kF_k)^{-1}E_k, 
	\ \ \ 
	F_{k+1}=F_k+\ol E_kF_k(I_n-\ol F_kF_k)^{-1}E_k,
\end{align*}
where 
$E_0=I_n-2\alpha\ol R^{-1}(\alpha I_n -A)^{-1}$ and  
	$F_0=-2\alpha(\alpha I_n - \ol A)^{-1}\ol B\ol R^{-1}(\alpha I_n-A)^{-1}$, with 
$R=I_n-(\alpha I_n-\ol A)^{-1}\ol B(\alpha I_n-A)^{-1}B$.

 When initially $B=L_BL_B^{\T}$ with $L_B\in\bbC^{n\times p}$ and $p\le n$, by defining 
 $V_0:=(\alpha I_n -\ol A)^{-1}\ol L_B$, $Y_0:=L_B^{\T}(\alpha I_n -\ol A)^{-1}\ol L_B>0$ and  
$A_{\alpha}:=I_n -2\alpha(\alpha I_n -A)^{-1}$, we have 
\begin{align*}
	\ol R^{-1}&=I_n +\ol V_0( I_p -Y_0 Y_0^{\T})^{-1}Y_0 L_B^{\HH},
	\\
	E_0&=A_{\alpha}-2\alpha \ol V_0 (I_p - Y_0Y_0^{\T})^{-1} Y_0 V_0^{\T},
	\ \ 
	F_0= -2\alpha V_0(I_p -Y_0^{\T} Y_0)^{-1}V_0^{\T}.
\end{align*}
Denote $T_0:=V_0^{\HH}V_0$ and since $Y_0$ is Hermitian, we get 
\begin{align*}
	(I -\ol F_0F_0)^{-1}
	&=
	\begin{multlined}[t]
		I +4\alpha^2\ol V_0 \left\{ I_p - 4\alpha^2(I_p -Y_0Y_0^{\T})^{-1} T_0 (I_p -Y_0^{\T} Y_0)^{-1} \ol T_0 \right\}^{-1} 
		\\
		\cdot (I_p -Y_0Y_0^{\T})^{-1} T_0 (I_p -Y_0^{\T} Y_0)^{-1} V_0^{\T}, 
	\end{multlined}
	\\	
	F_0(I -\ol F_0 F_0)^{-1}
	&=-2\alpha V_0 \left\{I_p -Y_0^{\T}Y_0 - 4\alpha^2 \ol T_0 (I_p -Y_0Y_0^{\T})^{-1}T_0 \right\}^{-1} V_0^{\T}. 
\end{align*}
Furthermore,  by defining  $V_1:=\ol A_{\alpha}V_0$, $\widehat{V}_1 := [V_0, V_1]$ and 
	$Y_1 = \begin{bmatrix}
		0 & Y_0 \\ Y_0 & -2\alpha T_0
	\end{bmatrix}$,
some manipulations similar to those in \eqref{eq:G1-care} yield   
\begin{align*}
	E_1=A_{\alpha}^2-2\alpha \overline{\widehat{V}}_1
	(I_{2p} -Y_1 Y_1^{\T})^{-1} Y_1 \widehat{V}_1^{\T}, 
	\ \ \ 
	F_1&=-2\alpha \widehat{V}_1 
	(I_{2p} -Y_1^{\T} Y_1)^{-1} \widehat{V}_1^{\T}. 
\end{align*}

From the above discussions, we know that the initial $E_0, F_0$ and the first iterates $E_1, F_1$ possess similar structures as  those in the dSDA for  CAREs. Thus with  the similar manipulations, where the SMWF~\eqref{eq:smwf} is applied, we eventually deduce the dSDA for the BSEP, as stated in the following theorem without proof.

\begin{theorem}[Decoupled formulae of the dSDA for the BSEP]\label{thm:decoupled-formulae-of-the-dsda-for-the-bethe-salpeter-eigenvalue-problem}
	Let $V_j = \overline{A}_{\alpha} V_{j-1}$ for $j\ge 1$. 
	Assume that $I_n-\ol F_kF_k$ are nonsingular for $k \geq 0$. 
	Then for all $k\ge 1$ the SDA produces 
	\begin{align*}
		E_k
		& =
		A_{\alpha}^{2^k} - 2\alpha 			
		[\overline{V}_0,  \overline{V}_1, \cdots, \overline{V}_{2^k-1} ] 
		\left(I - Y_k Y_k^{\T}\right)^{-1} Y_k 
		[V_0,  V_1, \cdots, V_{2^k-1} ]^{\T},
		\\
		F_k
		&=
		-2\alpha 
		[V_0,  V_1, \cdots, V_{2^k-1} ] 
		\left(I - Y_k^{\T} Y_k\right)^{-1}
		[V_0,  V_1, \cdots, V_{2^k-1} ]^{\T},
	\end{align*}
where $Y_k = \begin{bmatrix}
	0 & Y_{k-1} \\[5pt]  Y_{k-1} & -2\alpha T_{k-1}
\end{bmatrix}$ with $T_{k-1} = [V_0, V_1, \cdots, V_{2^{k-1}-1}]^{\HH}
[V_0, V_1, \cdots, V_{2^{k-1}-1}]$. 
\end{theorem}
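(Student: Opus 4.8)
The plan is to prove Theorem~\ref{thm:decoupled-formulae-of-the-dsda-for-the-bethe-salpeter-eigenvalue-problem} by induction on $k$, exactly mirroring the argument used for Theorem~\ref{thm:CAREiterationk}, since the excerpt already establishes that $E_0,F_0$ and $E_1,F_1$ have the same structural shape as $A_0,G_0$ (resp.\ $H_0$) and $A_1,H_1$ in the CARE case. The base case $k=1$ is the displayed pair of formulae for $E_1,F_1$ obtained just before the theorem; so the real work is the induction step, passing from the representations at level $j$ to those at level $j+1$.

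For the induction step I would assume $E_j = A_\alpha^{2^j} - 2\alpha\,\overline{\widehat V}_j (I - Y_j Y_j^{\T})^{-1} Y_j \widehat V_j^{\T}$ and $F_j = -2\alpha\,\widehat V_j (I - Y_j^{\T} Y_j)^{-1}\widehat V_j^{\T}$, then substitute into the doubling recursions $E_{j+1}=E_j(I_n-\overline F_j F_j)^{-1}E_j$ and $F_{j+1}=F_j+\overline E_j F_j(I_n-\overline F_j F_j)^{-1}E_j$. The first task is to apply the SMWF~\eqref{eq:smwf} to $(I_n-\overline F_j F_j)^{-1}$, using that $\overline F_j F_j = 4\alpha^2 \overline{\widehat V}_j (I-\overline{Y}_j^{\T}\overline{Y}_j)^{-1}\overline{Y}_j\,\overline{\widehat V}_j^{\T}\widehat V_j (I-Y_j^{\T}Y_j)^{-1}\widehat V_j^{\T}$; with $T_j := \widehat V_j^{\HH}\widehat V_j$ this yields a factorization of the form $I_n - \overline{\widehat V}_j(\cdots)\widehat V_j^{\T}$ with an inner matrix built from $Y_j$, $T_j$, $\overline T_j$, $\alpha$. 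The next task is to expand the products $E_j(\cdots)E_j$ and $\overline E_j F_j(\cdots)E_j$, absorbing the extra powers $A_\alpha^{2^j}$ into the new block columns $V_{2^j},\dots,V_{2^{j+1}-1}$ (so that $\widehat V_{j+1}=[\widehat V_j,\ \overline{A}_\alpha^{2^j}\widehat V_j\ \text{-type terms}]$ as in the CARE proof), collecting everything into a $2\times2$ block kernel in terms of $\overline{\widehat V}_{j+1},\widehat V_{j+1}$.

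The decisive step — the one I expect to be the main obstacle — is recognizing that the resulting $2\times 2$ block kernels telescope exactly into $(I-Y_{j+1}Y_{j+1}^{\T})^{-1}$ and $(I-Y_{j+1}^{\T}Y_{j+1})^{-1}$ with $Y_{j+1}=\begin{bmatrix}0 & Y_j \\ Y_j & -2\alpha T_j\end{bmatrix}$. Concretely I would factor the block kernel through unit upper/lower triangular block matrices (as done via $Z_1,Z_2$ in the proof of Theorem~\ref{thm:DAREiterationk}), so that the middle factor is block-diagonal with blocks $(I-Y_jY_j^{\T})^{-1}$ and the Schur-complement block $K$ (resp.\ $L$); then verify the algebraic identity $\begin{bmatrix}I&0\\ *&I\end{bmatrix}\begin{bmatrix}I-Y_jY_j^{\T}&0\\ 0&K\end{bmatrix}\begin{bmatrix}I&*\\ 0&I\end{bmatrix} = I - Y_{j+1}Y_{j+1}^{\T}$ and its transpose-dual, which hinges on the relations $Y_j$-Hermitian-related identities and $I - Y_j^{\T}Y_j$ appearing as the common diagonal block, just as $(M_2^G)^{-1}-Y_2Y_2^{\T}=I$ was verified in the DARE development. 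The sign bookkeeping from the complex conjugates (the $\overline{(\cdot)}$ on $E_k,F_k,A_\alpha$) and the minus signs (from $I_n-\overline F_j F_j$ rather than $I_n+G_jH_j$) is the part most prone to error, but it is routine once the block-triangular factorization is fixed.

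Finally I would note that the $E_{j+1}$ formula follows by the same triangular-factorization identity applied to the asymmetric recursion, with the leading term $A_\alpha^{2^j}\cdot A_\alpha^{2^j}=A_\alpha^{2^{j+1}}$ emerging cleanly and all cross-terms packaged into $Y_{j+1}$; this completes the induction. Since the computations are entirely parallel to those already carried out in detail for CAREs (Theorem~\ref{thm:CAREiterationk}) and for DAREs (Theorem~\ref{thm:DAREiterationk}), the statement in the excerpt is given without proof, and the proof sketch above indicates exactly which earlier manipulations to transcribe, substituting $\overline{A}_\alpha$ for $\widetilde A_\gamma$, $V_j$ for both $U_j$ and $V_j$ (the BSEP has a single sequence), $-2\alpha$ for $2\gamma$, and Hermitian transposes where complex conjugation is involved.
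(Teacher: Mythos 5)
Your plan is precisely the route the paper intends: the theorem is stated without proof, with the authors asserting it follows by the same SMWF-plus-block-triangular-factorization induction carried out in detail for DAREs (Theorem~\ref{thm:DAREiterationk}) and sketched for CAREs (Theorem~\ref{thm:CAREiterationk}), and your dictionary ($\overline{V}_j$ playing the role of $U_j$, $A_\alpha$ for $\widetilde{A}_\gamma$, $-2\alpha$ for $2\gamma$, Hermitian $Y_k$ so that $\overline{Y_k^{\T}Y_k}=Y_kY_k^{\T}$, base case $k=1$ from the displayed $E_1,F_1$) is exactly the right one. The only blemish is cosmetic: your written expression for $\overline{F}_jF_j$ contains a spurious factor $\overline{Y}_j$ after the first inverse (the inductive form of $F_j$ has no such factor, unlike $E_j$), but this slip does not affect the structure or validity of the argument.
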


\begin{remark}\label{rk:BS-problem}
Assume that there is no purely imaginary nor zero eigenvalues for $H$ and let  
$H X = X \Lambda$ with $X\in \mathbb{C}^{2n \times n}$ and  all eigenvalues of  $\Lambda \in \mathbb{C}^{n\times n}$ in the  interior of the  left-half  plane. 
Write $X=[X_1^{\T}, X_2^{\T}]$ 
with $X_1 \in \mathbb{C}^{n\times n}$ and choose $\alpha >0$, it then holds that $\lim_{k\to \infty} E_k  =0$ and $\lim_{k\to \infty} F_k = -X_2 X_1^{-1}$. With some simple but tedious computations,  we can further show that $\lim_{k\to \infty}(I- P)[I,  -F_k^{\T}]$
and  $\lim_{k\to \infty} \sin\Theta(X_2 X_1^{-1}, F_k) =0$, where $P$ is the 
projection matrix of $[I,  (X_2 X_1^{-1})^{\T}]$ and 
\begin{align*} 
	\Theta(W, Z) = \arccos\left[
\left(I+\overline{Z} Z\right)^{-1/2} \left(I-\overline{Z} W\right) 
\left(I+\overline{W}W\right)^{-1} \left(I-\overline{W} Z\right) 
\left(I+\overline{Z}Z\right)^{-1/2} \right]^{1/2}.
\end{align*}
Obviously, once obtaining $F_k$ we can approximate all eigenvalues of $\Lambda$ by those of 
\begin{align*}
	H_k=
	[I,  -F_k^{\HH}] 
H [I, -F_k^{\T}]^{\T}
(I + F_k^{\HH} F_k)^{-1}.
\end{align*} 
In \cite{bennerKK2016reduced} it was claimed,   in quantum chemistry  and modern material science, that the matrix  $B$ is of low-rank in some  large-scale discretized BSEPs. Then by Theorem~\ref{thm:decoupled-formulae-of-the-dsda-for-the-bethe-salpeter-eigenvalue-problem}, $F_k$ is of low-rank thus providing further  possibilities for the solution of large-scale BSEPs.   
\end{remark}

\subsection{dSDA for MAREs}\label{ssec:dsda-for-mares}

Assume that $B$ and $C$ are of low rank and possess the full rank factorizations  
$B = B_l B_r^{\T}$ and $C = C_l C_r^{\T}$, where $B_l\in \mathbb{R}^{m\times m_1}, 
B_r \in \mathbb{R}^{n\times m_1}$, $C_l \in \mathbb{R}^{n\times n_1}, C_r\in \mathbb{R}^{m\times n_1}$.  
Denote  
\begin{equation}\label{eq:sym}
	\begin{aligned}
		&		Y_0 := B_r^{\T} D_\gamma^{-1} C_l, \ \ \ Z_0 := C_r^{\T} A_\gamma^{-1} B_l, & \\
		& U_0 := A_\gamma^{-1} B_l, \ \ V_0 := A_\gamma^{-{\T}} C_r, \ \ \ 
		W_0 := D_\gamma^{-1} C_l, \ \ Q_0 := D_\gamma^{-{\T}} B_r, & \\
		&T_0 := Q_0^{\T} W_0 = B_r^{\T} D_\gamma^{-2} C_l, \ \ \ 
		S_0 := V_0^{\T} U_0 = C_r^{\T} A_\gamma^{-2} B_l, & \\
		& \widetilde{A}_\gamma := I_m - 2\gamma A_\gamma^{-1}, \ \ \ 
		\widetilde{D}_\gamma := I_n - 2\gamma D_\gamma^{-1}, & \\
		& M_0 := (I_{m_1} - Y_0 Z_0)^{-1}, \ \ \ N_0 := (I_{n_1} - Z_0 Y_0)^{-1}.
	\end{aligned}
\end{equation}
Note that $M_0 Y_0 = Y_0 N_0$ and $N_0 Z_0 = Z_0 M_0$. 
In terms of the matrices in \eqref{eq:sym}, we apply the SMWF~\eqref{eq:smwf} and obtain
\begin{align*}
	W_\gamma^{-1} & = A_\gamma^{-1} + U_0M_0 Y_0 V_0^{\T}, \qquad 
	&V_\gamma^{-1} &= D_\gamma^{-1} + W_0 N_0 Z_0 Q_0^{\T}, \\
	F_0 & = \widetilde{A}_\gamma - 2\gamma U_0 M_0 Y_0 V_0^{\T}, \qquad 
	&E_0 & = \widetilde{D}_\gamma - 2\gamma W_0 N_0 Z_0 Q_0^{\T}, \\
	H_0 & = 2\gamma U_0 M_0 Q_0^{\T}, \qquad 
	&G_0 &= 2\gamma W_0 N_0 V_0^{\T}. 
\end{align*}
Furthermore, let 
	$K := M_0^{-1} - 4\gamma^2 T_0 N_0 S_0$ and 
	$L := N_0^{-1} - 4\gamma^2 S_0 M_0 T_0$,
which satisfy 
\begin{equation}\label{eq:KLrelation}
	S_0 M_0 K=L N_0 S_0, \qquad    
   T_0 N_0 L = K M_0 T_0, 
\end{equation}
routine manipulations produce
\begin{align*}
	(I_m-H_0 G_0)^{-1}& = I_m + 4\gamma^2 U_0 K^{-1} T_0 N_0 V_0^{\T}, 
	& \, \,
	(I_n-G_0 H_0)^{-1}& = I_n + 4\gamma^2 W_0 L^{-1} S_0 M_0 Q_0^{\T}, 
	\\ 
	(I_m-H_0 G_0)^{-1}H_0 &= 2\gamma U_0 K^{-1} Q_0^{\T}, 
	& \, \, 
	(I_n-G_0 H_0)^{-1} G_0 &= 2\gamma W_0 L^{-1} V_0^{\T}, 
\end{align*}
where the invertibility of $K$ and $L$ comes from that of $I_m-H_0 G_0$ and $I_n-G_0 H_0$. In fact, $K$ and $L$ are nonsingular if and only if  $I_m-H_0 G_0$ and $I_n-G_0 H_0$ are nonsingular, respectively.

Since \eqref{eq:KLrelation} indicates $N_0S_0K^{-1}=L^{-1}S_0M_0$ and 
$K^{-1}T_0 N_0 = M_0T_0L^{-1}$, then with 
\begin{align*}
	&U_1 := \widetilde A_\gamma U_0, \qquad  V_1 := \widetilde A_\gamma^{\T} V_0, \qquad 
	W_1 := \widetilde D_\gamma W_0, \qquad Q_1 := \widetilde D_\gamma^{\T} Q_0, 
	\\
	&\widehat{U}_1 := [U_0, U_1], \qquad \widehat{V}_1 := [V_0, V_1], \qquad 
	\widehat{W}_1 := [W_0, W_1], \qquad \widehat{Q}_1 := [Q_0, Q_1], 
	\\
	&Y_1 := \begin{bmatrix} 0 & Y_0 \\ Y_0 & -2\gamma T_0 \end{bmatrix}, 
	\qquad 
	Z_1 := \begin{bmatrix} 0 & Z_0 \\ Z_0 & -2\gamma S_0 \end{bmatrix}, 
\end{align*}
the SDA \eqref{eq:maresda} leads to  
\begin{equation}\label{eq:F1}
	\begin{aligned}[b]
		F_1 =& \widetilde{A}_\gamma^2 - 2\gamma \widehat{U}_1 
		\begin{bmatrix}
			-2\gamma Y_0 N_0 S_0 K^{-1} Y_0 & Y_0 L^{-1} 
			\\[5pt] K^{-1}Y_0 & -2\gamma K^{-1} T_0 N_0 
		\end{bmatrix}
		\widehat{V}_1^{\T}
		\\
		=& \widetilde{A}_\gamma^2 - 2\gamma 
		\widehat{U}_1
		Y_1
		\begin{bmatrix}
			N_0 + 4\gamma^2 N_0 Z_0 T_0 L^{-1} S_0 Y_0 N_0 & -2\gamma N_0 Z_0 T_0 L^{-1} \\[5pt]
			-2\gamma L^{-1} S_0 Y_0 N_0 & L^{-1} 
		\end{bmatrix} 
		\widehat{V}_1^{\T} 
		\\
		=& \widetilde{A}_\gamma^2 - 2\gamma \widehat{U}_1 
		Y_1 \left(I_{2n_1} - Z_1 Y_1\right)^{-1} \widehat{V}_1^{\T} 
		\equiv \widetilde{A}_\gamma^2 - 2\gamma 
		\widehat{U}_1 
		\left(I_{2m_1} - Y_1 Z_1\right)^{-1} Y_1 
		\widehat{V}_1^{\T}. 
	\end{aligned}
\end{equation} 
Similarly, we have
\begin{equation}\label{eq:E1}
	\begin{aligned}[b]
		E_1 =& \widetilde{D}_\gamma^2 - 2\gamma \widehat{W}_1
		\begin{bmatrix}
			-2\gamma Z_0 M_0 T_0 L^{-1} Z_0 & Z_0 K^{-1} \\[5pt] 
			L^{-1} Z_0 & -2\gamma L^{-1} S_0 M_0 
		\end{bmatrix}
		\widehat{Q}_1^{\T}
		\\
		=& \widetilde{D}_\gamma^2 - 2\gamma \widehat{W}_1 
		Z_1
		\begin{bmatrix}
			M_0 + 4\gamma^2 M_0 Y_0 S_0 K^{-1} T_0 Z_0 M_0 & -2\gamma M_0 Y_0 S_0 K^{-1} \\[5pt] -2\gamma K^{-1} T_0 Z_0 M_0 & K^{-1} 
		\end{bmatrix} 
		\widehat{Q}_1^{\T} 
		\\
		=& \widetilde{D}_\gamma^2 - 2\gamma \widehat{W}_1 Z_1 \left(I_{2m_1} - Y_1 Z_1\right)^{-1} 
		\widehat{Q}_1^{\T}
		\equiv \widetilde{D}_\gamma^2 - 2\gamma\widehat{W}_1   \left(I_{2n_1} - Z_1 Y_1\right)^{-1} Z_1
		\widehat{Q}_1^{\T},
	\end{aligned}
\end{equation}
\begin{align}
	H_1 =&2\gamma
	\widehat{U}_1
	\begin{bmatrix}
		M_0 + 4\gamma^2 M_0 Y_0 S_0 K^{-1} T_0 N_0 Z_0  & -2\gamma M_0 Y_0   S_0 K^{-1}\\[5pt]
		-2\gamma K^{-1} T_0 N_0 Z_0  & K^{-1}
	\end{bmatrix}
	\widehat{Q}_1^{\T}\nonumber
	\\
	\equiv&2\gamma \widehat{U}_1
	\left(I_{2m_1} - Y_1 Z_1\right)^{-1} \widehat{Q}_1^{\T}, 
	\label{eq:mareH1}
	\\
	G_1 =&2\gamma 
	\widehat{W}_1
	\begin{bmatrix}
		N_0 + 4\gamma^2 N_0 Z_0 T_0 L^{-1} S_0 Y_0 N_0 & -2\gamma N_0 Z_0 T_0 L^{-1} \\[5pt] -2\gamma L^{-1} S_0 Y_0 N_0 & L^{-1} 
	\end{bmatrix}
	\widehat{V}_1^{\T} \nonumber 
	\\
	\equiv & 2\gamma \widehat{W}_1	\left(I_{2n_1} - Z_1 Y_1\right)^{-1}\widehat{V}_1^{\T}.
	\label{eq:mareG1}
\end{align}

\begin{remark}
	Checking $(I_{2n_1} - Z_1 Y_1)^{-1}$ and $(I_{2m_1} - Y_1 Z_1)^{-1}$ respectively have the forms in \eqref{eq:F1} and \eqref{eq:E1} is easy but finding the formulae as well as the recursions for $Y_j$ and $Z_j$ in the first place is nontrivial! 
\end{remark}

By pursuing  a similar process   we subsequently obtain the following theorem.

\begin{theorem}[Decoupled formulae of the dSDA for MAREs] \label{thm:MAREiterationk}
Define $U_0 := A_\gamma^{-1} B_l$, $V_0 := A_\gamma^{-{\T}} C_r$, 
$W_0 := D_\gamma^{-1} C_l$, $Q_0 := D_\gamma^{-{\T}} B_r$, 
$Y_0 := B_r^{\T} D_\gamma^{-1} C_l$, $Z_0 := C_r^{\T} A_\gamma^{-1} B_l$, 
$T_0 := Q_0^{\T} W_0$ and  $S_0 := V_0^{\T} U_0$.  
For $j \geq 1$, denote 
$U_j := \widetilde A_\gamma U_{j-1}$, $V_j := \widetilde A_\gamma^{{\T}} V_{j-1}$,  
$W_j := \widetilde D_\gamma W_{j-1}$, $Q_j :=  \widetilde D_\gamma^{{\T}} Q_{j-1}$, 
\begin{align*}
	& \widehat{U}_j := [U_0, \cdots, U_{2^j -1}], \ \  \widehat{V}_j := [V_0, \cdots, V_{2^j -1}], \ \  
	\widehat{W}_j := [W_0, \cdots, W_{2^j -1}], \ \  \widehat{Q}_j := [Q_0, \cdots, Q_{2^j -1}], 
	\\ 
	& T_j := \widehat{Q}_j^{\T} \widehat{W}_j, \qquad S_j := \widehat{V}_j^{\T} \widehat{U}_j, \qquad 
	Y_j := \begin{mat}{cc} 0 & Y_{j-1} \\ Y_{j-1} & -2\gamma T_{j-1} \end{mat}, \qquad Z_j := \begin{mat}{cc} 0 & Z_{j-1} \\ Z_{j-1} & -2\gamma S_{j-1} \end{mat}.
\end{align*} 
Assume that $I_m - H_k G_k$ and $I_n - G_k H_k$ are nonsingular for $k \geq 0$. 
For all $k\ge 1$ it holds that 
\be
	&& F_k = \widetilde{A}_\gamma^{2^k} - 2\gamma \widehat{U}_k 
	(I_{2^k m_1} - Y_k Z_k)^{-1} Y_k \widehat{V}_k^{\T},  
	\ \ \ 
	E_k = \widetilde{D}_\gamma^{2^k} - 2\gamma \widehat{W}_k (I_{2^k n_1} - Z_k Y_k)^{-1} Z_k \widehat{Q}_k^{\T},  
	\\
	&& H_k = 2\gamma \widehat{U}_k  (I_{2^k m_1} - Y_k Z_k)^{-1} \widehat{Q}_k^{\T}, 
	\qquad \qquad \quad 
	G_k = 2\gamma \widehat{W}_k (I_{2^k n_1} - Z_k Y_k)^{-1} \widehat{V}_k^{\T}.  
\ee
\end{theorem}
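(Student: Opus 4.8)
The plan is to prove the four identities simultaneously by induction on $k$, in the spirit of the proof of Theorem~\ref{thm:DAREiterationk}. The base case $k=1$ is already in hand: it is exactly \eqref{eq:F1}, \eqref{eq:E1}, \eqref{eq:mareH1} and \eqref{eq:mareG1}. For the inductive step I would assume the four formulae at a level $k=j$ with $j\ge 1$ and deduce them at level $j+1$, exploiting the fact that level $j$ has the same algebraic shape as level $0$. Concretely, writing $M_j := (I_{2^j m_1}-Y_j Z_j)^{-1}$ and $N_j := (I_{2^j n_1}-Z_j Y_j)^{-1}$ (so that $M_0,N_0$ of \eqref{eq:sym} are the $j=0$ instances), the induction hypothesis reads $F_j = \widetilde A_\gamma^{2^j}-2\gamma\widehat U_j M_j Y_j\widehat V_j^{\T}$, $H_j = 2\gamma\widehat U_j M_j\widehat Q_j^{\T}$, together with the mirror identities $E_j = \widetilde D_\gamma^{2^j}-2\gamma\widehat W_j N_j Z_j\widehat Q_j^{\T}$ and $G_j = 2\gamma\widehat W_j N_j\widehat V_j^{\T}$. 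Just as for $M_0,N_0$, the trivial relations $(I-Y_jZ_j)Y_j = Y_j(I-Z_jY_j)$ and $(I-Z_jY_j)Z_j = Z_j(I-Y_jZ_j)$ yield the commutation rules $M_jY_j = Y_jN_j$ and $N_jZ_j = Z_jM_j$, which will be used throughout.

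Next I would carry the $j=0$ manipulations of Section~\ref{ssec:dsda-for-mares} over in the same way under the substitution $(M_0,N_0,T_0,S_0,U_0,V_0,W_0,Q_0,Y_0,Z_0,\widetilde A_\gamma,\widetilde D_\gamma)\mapsto(M_j,N_j,T_j,S_j,\widehat U_j,\widehat V_j,\widehat W_j,\widehat Q_j,Y_j,Z_j,\widetilde A_\gamma^{2^j},\widetilde D_\gamma^{2^j})$. Using $T_j = \widehat Q_j^{\T}\widehat W_j$ and $S_j = \widehat V_j^{\T}\widehat U_j$ one gets $H_jG_j = 4\gamma^2\widehat U_j M_j T_j N_j\widehat V_j^{\T}$ and $G_jH_j = 4\gamma^2\widehat W_j N_j S_j M_j\widehat Q_j^{\T}$; then, with $K_j := M_j^{-1}-4\gamma^2 T_j N_j S_j$ and $L_j := N_j^{-1}-4\gamma^2 S_j M_j T_j$ (reducing to $K,L$ of Section~\ref{ssec:dsda-for-mares} at $j=0$), the SMWF~\eqref{eq:smwf} gives $(I_m-H_jG_j)^{-1} = I_m+4\gamma^2\widehat U_j K_j^{-1}T_j N_j\widehat V_j^{\T}$ and $(I_m-H_jG_j)^{-1}H_j = 2\gamma\widehat U_j K_j^{-1}\widehat Q_j^{\T}$, with the symmetric identities for $(I_n-G_jH_j)^{-1}$ and $(I_n-G_jH_j)^{-1}G_j$ built from $L_j$. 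I would also record that \eqref{eq:KLrelation} propagates to $S_jM_jK_j = L_jN_jS_j$ and $T_jN_jL_j = K_jM_jT_j$. Substituting these into the recursions \eqref{eq:maresda} and collecting over $\widehat U_{j+1}=[\widehat U_j,\widetilde A_\gamma^{2^j}\widehat U_j]$, $\widehat V_{j+1}=[\widehat V_j,(\widetilde A_\gamma^{2^j})^{\T}\widehat V_j]$ and the analogous $\widehat W_{j+1},\widehat Q_{j+1}$, each of $F_{j+1},E_{j+1},H_{j+1},G_{j+1}$ emerges in the shape asserted by the theorem, up to identifying an explicit $2\times 2$ block kernel assembled from $M_j,N_j,K_j,L_j,Y_j,Z_j,T_j,S_j$.

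The hard part will be that last identification: showing that the block kernels obtained this way collapse to $(I_{2^{j+1}m_1}-Y_{j+1}Z_{j+1})^{-1}$ and $(I_{2^{j+1}n_1}-Z_{j+1}Y_{j+1})^{-1}$ (multiplied by $Y_{j+1}$ or $Z_{j+1}$ in the $F_{j+1},E_{j+1}$ cases). The device I would isolate is the block-triangular factorization
\[
I_{2^{j+1}m_1}-Y_{j+1}Z_{j+1} = \begin{bmatrix}I & 0 \\ 2\gamma T_j Z_j M_j & I\end{bmatrix}\begin{bmatrix}M_j^{-1} & 2\gamma Y_j S_j \\ 0 & K_j\end{bmatrix},
\]
which holds because $Z_jM_jY_j = N_j-I$ (from $M_jY_j = Y_jN_j$ and $Z_jY_j = I-N_j^{-1}$), so that the Schur complement of the $(1,1)$-block is exactly $K_j$; symmetrically, $I_{2^{j+1}n_1}-Z_{j+1}Y_{j+1}$ factors with Schur complement $L_j$. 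Inverting these factorizations and repeatedly using $M_jY_j = Y_jN_j$, $N_jZ_j = Z_jM_j$ and the propagated form of \eqref{eq:KLrelation} to push factors through, one matches the kernel of each of $F_{j+1},E_{j+1},H_{j+1},G_{j+1}$ against the claimed inverse, which closes the induction. Everything apart from this kernel identification is a routine (if lengthy) transcription of the $j=0$ computation under the substitution above.
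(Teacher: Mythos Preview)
Your proposal is correct and follows essentially the same approach the paper intends: the paper gives the $k=1$ case explicitly in \eqref{eq:F1}--\eqref{eq:mareG1} and then states that ``by pursuing a similar process'' one obtains the general result, i.e., an induction that repeats the $k=0\to 1$ computation with the level-$j$ data in place of the level-$0$ data, exactly as you outline. Your explicit block-triangular factorization of $I-Y_{j+1}Z_{j+1}$ with Schur complement $K_j$ (and its mirror with $L_j$) is a clean way to carry out the kernel identification that the paper leaves implicit.
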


Again the four formulae in the SDA \eqref{eq:maresda} are decoupled. There is no reason why we need to calculate $F_k$, $E_k$ or $G_k$, if we only want to solve MAREs and control convergence using $H_k - H_{k-1}$  or the normalized residual. 

\begin{remark}\label{rk:dsda-for-adda}
	For the alternating-directional doubling algorithm  (ADDA for abbreviation) proposed in \cite{wangWL2012alternatingdirectional}, which is  a variation of the SDA, the initial items contain two parameters as follows:
	\begin{align*}
		A_\beta& := A + \beta I_m, \qquad  &  D_\alpha &:= D + \alpha I_n,  \\
		W_{\alpha,\beta} &:= A_\beta - B D_\alpha^{-1} C, \qquad &  V_{\alpha,\beta} &:= D_\alpha - C A_\beta^{-1} B, \\
		F_0& = I_m - (\beta +\alpha) W_{\alpha,\beta}^{-1}, \qquad & E_0 &= I_n - (\alpha+\beta) V_{\alpha,\beta}^{-1},  \\
		H_0 &= (\beta + \alpha) W_{\alpha,\beta}^{-1} B D_{\alpha}^{-1}, \qquad  & G_0 &= (\alpha + \beta) D_\alpha^{-1} C W_{\alpha,\beta}^{-1},  
	\end{align*}
	where $\alpha \ge \max_i a_{ii}$,  $\beta \ge \max_j d_{jj}$ with $a_{ii}$ and $d_{jj}$ respectively being the diagonal entries of $A$ and $D$. Similar to \eqref{eq:sym} we define 
	\begin{align*}
		&		Y_0 := B_r^{\T} D_\alpha^{-1} C_l, \ \ \ Z_0 := C_r^{\T} A_\beta^{-1} B_l, & \\
		& U_0 := A_\beta^{-1} B_l, \ \ V_0 := A_\beta^{-{\T}} C_r, \ \ \ 
		W_0 := D_\alpha^{-1} C_l, \ \ Q_0 := D_\alpha^{-{\T}} B_r, & \\
		& \widetilde{A}_\beta := I_m - (\alpha + \beta) A_\beta^{-1}, \ \ \ 
		\widetilde{D}_\alpha := I_n - (\alpha + \beta) D_\alpha^{-1}, & 
	\end{align*}
	then applying  the SMWF~\eqref{eq:smwf} yields  
	\be
	&& F_0  = \widetilde{A}_\beta - (\alpha + \beta) U_0 (I_{m_1} -Y_0 Z_0 )^{-1}Y_0 V_0^{\T}, \qquad 
	E_0  = \widetilde{D}_\alpha - (\alpha + \beta) W_0 (I_{n_1} - Z_0 Y_0)^{-1} Z_0 Q_0^{\T}, \\
	&& H_0 = (\alpha + \beta) U_0 (I_{m_1} -Y_0 Z_0 )^{-1} Q_0^{\T}, \qquad \qquad \quad    
	G_0 = (\alpha + \beta) W_0 (I_{n_1} - Z_0 Y_0)^{-1} V_0^{\T}. 
	\ee
Let 
		$U_1 := \widetilde A_\beta U_0$,   $V_1 := \widetilde A_\beta^{\T} V_0$,  
		$W_1 := \widetilde D_\alpha W_0$,  $Q_1 := \widetilde D_\alpha^{\T} Q_0$,
		\begin{align*}
			Y_1 := \begin{bmatrix} 0 & Y_0 \\ Y_0 & -(\alpha+\beta) T_0 \end{bmatrix}, 
			\qquad 
			Z_1 := \begin{bmatrix} 0 & Z_0 \\ Z_0 & -(\alpha+\beta) S_0 \end{bmatrix}, 
		\end{align*}
	where $T_0 := Q_0^{\T} W_0$ and	$S_0 := V_0^{\T} U_0$, and use similar notations $\widehat{U}_1$, $\widehat{V}_1$, $\widehat{W}_1$ and $\widehat{Q}_1$ as in Theorem~\ref{thm:MAREiterationk},   then by  the  manipulations analogy to \eqref{eq:F1}, \eqref{eq:E1},\eqref{eq:mareH1} and \eqref{eq:mareG1},    we get 
	\begin{align*}
		F_1&=\widetilde{A}_\beta^2 - (\alpha+\beta) 
		\widehat{U}_1
		(I_{2m_1} - Y_1 Z_1)^{-1} Y_1 \widehat{V}_1^{\T},
		\ \ \ 
		E_1 = \widetilde{D}_\alpha^2 - (\alpha+\beta) 
		\widehat{W}_1
		(I_{2n_1} - Z_1 Y_1)^{-1} Z_1
		\widehat{Q}_1^{\T}, 
		\\
		H_1 &= (\alpha+\beta) 
		\widehat{U}_1 (I_{2m_1} - Y_1 Z_1)^{-1}
		\widehat{Q}_1^{\T}, 
		\qquad \qquad  \,   
		G_1=(\alpha + \beta)
		\widehat{W}_1 (I_{2n_1} - Z_1 Y_1)^{-1} 
		\widehat{V}_1^{\T}. 
	\end{align*}
	Clearly,  with low-rank structure  $F_1, E_1, H_1$ and $G_1$ in the ADDA is decoupled. Furthermore, by   performing many similar operations we will  obtain the same  results as those in Theorem~\ref{thm:MAREiterationk}, implying that  the ADDA can be decoupled. 
\end{remark}

\begin{theorem}[Decoupled form of the ADDA for MAREs]\label{thm:decoupled-form-of-the-adda-for-mares}
	Define $U_j:=\widetilde{A}_{\beta} U_{j-1}$, $V_j:=\widetilde{A}_{\beta}^{\T} V_{j-1}$, 
	${W}_j:=\widetilde{D}_{\alpha} W_{j-1}$ and $Q_j:= \widetilde{D}_{\alpha}^{\T} Q_{j-1}$ for $j\ge 1$.  
	Assume that $I_m - H_k G_k$ and $I_n - G_k H_k$ are nonsingular for $k \geq 0$. 
	For $k\ge 2$,   denote $\widehat{Q}_k=[Q_0, Q_1, \cdots, Q_{2^k-1}]$,  $\widehat{U}_k = [U_0, U_1, \cdots, U_{2^k-1}]$, $\widehat{V}_k=[V_0, V_1, \cdots, V_{2^k-1}]$ and  $\widehat{W}_k = [W_0, W_1, \cdots, W_{2^k-1}]$, and 	
let $Y_k=\begin{bmatrix}
		0 & Y_{k-1} \\ Y_{k-1} & -(\alpha+\beta)T_{k-1}
	\end{bmatrix}$ and   $Z_k=\begin{bmatrix}
	0& Z_{k-1}\\Z_{k-1} & -(\alpha+\beta) S_{k-1}
\end{bmatrix}$  with $T_{k-1} = \widehat{Q}_{k-1}^{\T} \widehat{W}_{k-1}$ 
and $S_{k-1} = \widehat{V}_{k-1}^{\T} \widehat{U}_{k-1}$. 
The ADDA produces the following decoupled form
\begin{align*}
	& F_k = \widetilde{A}_{\beta}^{2^k} - (\alpha+\beta) \widehat{U}_k  
	(I_{2^k m_1} - Y_k Z_k)^{-1} Y_k \widehat{V}_k^{\T}, 
	\qquad 
	E_k = \widetilde{D}_{\alpha}^{2^k} - (\alpha+\beta) \widehat{W}_k
	 (I_{2^k n_1} - Z_k Y_k)^{-1} Z_k \widehat{Q}_k^{\T}, 
	 \\
	 & H_k = (\alpha + \beta) \widehat{U}_k 
	 (I_{2^k m_1} - Y_k Z_k)^{-1} \widehat{Q}_k^{\T},
	 \qquad \qquad \qquad  
	 G_k = (\alpha +\beta)\widehat{W}_k
	 (I_{2^k n_1} -Z_k Y_k)^{-1} \widehat{V}_k^{\T}. 
\end{align*}
\end{theorem}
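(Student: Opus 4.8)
The plan is to prove Theorem~\ref{thm:decoupled-form-of-the-adda-for-mares} by induction on $k$, running exactly the machinery behind Theorem~\ref{thm:MAREiterationk}, with the single-parameter quantities $2\gamma$, $\widetilde A_\gamma$, $\widetilde D_\gamma$ replaced throughout by the two-parameter ones $\alpha+\beta$, $\widetilde A_\beta$, $\widetilde D_\alpha$. The decisive point is that the ADDA iterates obey the very same algebraic recursions as \eqref{eq:maresda},
\begin{align*}
F_{k+1} &= F_k (I_m - H_k G_k)^{-1} F_k, & E_{k+1} &= E_k (I_n - G_k H_k)^{-1} E_k, \\
H_{k+1} &= H_k + F_k (I_m - H_k G_k)^{-1} H_k E_k, & G_{k+1} &= G_k + E_k (I_n - G_k H_k)^{-1} G_k F_k,
\end{align*}
so that the only data differing from the SDA/MARE setting are the starting matrices, and Remark~\ref{rk:dsda-for-adda} has already recast $F_0,E_0,H_0,G_0$, and then $F_1,E_1,H_1,G_1$, into precisely the low-rank shape claimed in the theorem. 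Thus the base case $k=1$ is nothing but the four displayed formulae at the end of Remark~\ref{rk:dsda-for-adda}.

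For the inductive step, assume the asserted decoupled forms hold at level $j\ge1$. I would first feed them into $I_m-H_jG_j$ and $I_n-G_jH_j$: writing $\mathcal{M}_j:=(I-Y_jZ_j)^{-1}$ and $\mathcal{N}_j:=(I-Z_jY_j)^{-1}$, one has $H_j=(\alpha+\beta)\widehat{U}_j\mathcal{M}_j\widehat{Q}_j^{\T}$ and $G_j=(\alpha+\beta)\widehat{W}_j\mathcal{N}_j\widehat{V}_j^{\T}$, so that the contractions $\widehat{Q}_j^{\T}\widehat{W}_j=T_j$ and $\widehat{V}_j^{\T}\widehat{U}_j=S_j$ surface automatically, and the SMWF~\eqref{eq:smwf} produces, with the kernels
\[
K_j:=(I-Y_jZ_j)-(\alpha+\beta)^2 T_j\mathcal{N}_j S_j, \qquad L_j:=(I-Z_jY_j)-(\alpha+\beta)^2 S_j\mathcal{M}_j T_j,
\]
the expressions $(I_m-H_jG_j)^{-1}=I_m+(\alpha+\beta)^2\widehat{U}_jK_j^{-1}T_j\mathcal{N}_j\widehat{V}_j^{\T}$, $(I_m-H_jG_j)^{-1}H_j=(\alpha+\beta)\widehat{U}_jK_j^{-1}\widehat{Q}_j^{\T}$, together with the mirror formulae on the $G_j$ side. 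Exactly as at the first step (where they read $M_0Y_0=Y_0N_0$, $N_0Z_0=Z_0M_0$ and \eqref{eq:KLrelation}), the push-through identities $\mathcal{M}_jY_j=Y_j\mathcal{N}_j$, $\mathcal{N}_jZ_j=Z_j\mathcal{M}_j$, $S_j\mathcal{M}_jK_j=L_j\mathcal{N}_jS_j$ and $T_j\mathcal{N}_jL_j=K_j\mathcal{M}_jT_j$ are what is needed to juggle the kernels.

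Next I would substitute all of this into the four ADDA recursions, use $\widehat{U}_{j+1}=[\widehat{U}_j,\ \widetilde A_\beta^{2^j}\widehat{U}_j]$ (valid since $U_i=\widetilde A_\beta^iU_0$) and the analogous splittings of $\widehat{V}_{j+1},\widehat{W}_{j+1},\widehat{Q}_{j+1}$, collect the result into $2\times2$ block matrices with blocks of size $2^jm_1$ or $2^jn_1$, and read off a block LU-type factorization of each core matrix --- precisely the manipulation done explicitly in \eqref{eq:F1}--\eqref{eq:mareG1} for $j=0$. The closing move is to recognise these central matrices as $(I_{2^{j+1}m_1}-Y_{j+1}Z_{j+1})^{-1}$ and $(I_{2^{j+1}n_1}-Z_{j+1}Y_{j+1})^{-1}$, which amounts to checking the block identity
\[
I_{2^{j+1}m_1}-Y_{j+1}Z_{j+1}=\begin{bmatrix} I & 0 \\ (\alpha+\beta)T_jZ_j\mathcal{M}_j & I\end{bmatrix}\begin{bmatrix} I-Y_jZ_j & 0 \\ 0 & K_j\end{bmatrix}\begin{bmatrix} I & (\alpha+\beta)\mathcal{M}_jY_jS_j \\ 0 & I\end{bmatrix}
\]
(its Schur complement being $K_j$ by the push-throughs above) and the analogous factorization of $I_{2^{j+1}n_1}-Z_{j+1}Y_{j+1}$; the definitions $Y_{j+1}=\begin{bmatrix}0&Y_j\\Y_j&-(\alpha+\beta)T_j\end{bmatrix}$, $Z_{j+1}=\begin{bmatrix}0&Z_j\\Z_j&-(\alpha+\beta)S_j\end{bmatrix}$, $T_{j+1}=\widehat{Q}_{j+1}^{\T}\widehat{W}_{j+1}$, $S_{j+1}=\widehat{V}_{j+1}^{\T}\widehat{U}_{j+1}$ then close the recursion, yielding the stated forms at level $j+1$.

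\textbf{Main obstacle.} There is nothing conceptually new here: the argument is the word-for-word transcript of the proof of Theorem~\ref{thm:MAREiterationk} under $2\gamma\mapsto\alpha+\beta$, $\widetilde A_\gamma\mapsto\widetilde A_\beta$, $\widetilde D_\gamma\mapsto\widetilde D_\alpha$, and the $k=1$ case is already supplied by Remark~\ref{rk:dsda-for-adda}. The only genuine labour is the block bookkeeping in the inductive step --- tracking which SMWF kernel ($K_j$ or $L_j$) belongs in which corner, confirming that the off-diagonal blocks produced by expanding $F_j(I-H_jG_j)^{-1}F_j$ and $H_j+F_j(I-H_jG_j)^{-1}H_jE_j$ coincide with the triangular factors flanking $I-Y_{j+1}Z_{j+1}$, and verifying that the push-through identities for $K_j,L_j$ persist under $Y_j\mapsto Y_{j+1}$, $Z_j\mapsto Z_{j+1}$. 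Given space constraints one could legitimately present the whole thing as \emph{mutatis mutandis} identical to the proof of Theorem~\ref{thm:MAREiterationk}.
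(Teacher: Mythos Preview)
Your proposal is correct and is precisely the approach the paper takes: the paper does not give a separate proof of Theorem~\ref{thm:decoupled-form-of-the-adda-for-mares}, but simply states in Remark~\ref{rk:dsda-for-adda} that ``by performing many similar operations we will obtain the same results as those in Theorem~\ref{thm:MAREiterationk},'' after having verified the $k=1$ case explicitly. Your write-up actually supplies more of the inductive bookkeeping (the block LDU factorization of $I-Y_{j+1}Z_{j+1}$ with Schur complement $K_j$, and the push-through identities) than the paper does, but the route is identical---it is the proof of Theorem~\ref{thm:MAREiterationk} under the substitutions $2\gamma\mapsto\alpha+\beta$, $\widetilde A_\gamma\mapsto\widetilde A_\beta$, $\widetilde D_\gamma\mapsto\widetilde D_\alpha$.
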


\section{Numerical example}\label{sec:numerical-example}
In this section, we apply the  proposed dSDA  to one steel profile cooling model     
to illustrate  its feasibility  and  also   the  fault,  hence showing  the necessity  of truncation.    

\begin{example}
	We test the dSDA on one example  on  the cooling of steel rail profiles, which is available  from morWiki~\cite{community}   and whose size is $1357$.   In this example, $A\in \mathbb{R}^{1357 \times 1357}$ is  negative definite, thus stable, and  $B$ and $C^{\T}$ respectively   have $7$ and $6$ columns. To approximate the  stabilizing solution, we solve the corresponding CARE~\eqref{eq:care}.   	For stopping criteria, we use the normalized residual of the CARE: 
	\begin{align*}
	\rho ({H}_k):=&
	\frac{	\|A^{\T} H_k+ H_k A - H_k B B^{\T} H_k + C^{\T} C \|_F}
{2\|A^{\T} H_k\|_F + \|H_k B B^{\T}H_k\|_F + \|C^{\T} C\|_F }. 
	\end{align*}
We set the tolerance for $\rho(H_k)$ as $10^{-13}$ and the maximal number of iterations to  $20$. 
 
	Table~\ref{tab:residual} shows the variation  of the  normalized  residual $\rho(H_k)$ and the numerical rank of $H_k$ as determined by MATLAB (or r($H_k$)) along with the iteration index $k$. With $9$ doubling iterations the dSDA produces a stabilizing  approximation  whose relative residual is $7.614\times 10^{-15}$.  Besides,   the   computed solution  has a low rank of $191$.  The total  execution time is $60.156$ seconds  when  running   on a 64-bit  PC with an Intel Core i7 CPU at 2.70GHz and 16G RAM.  
	
For comparison, we also apply the SDA with the same parameters.  After $9$ iterates  it produces an accurate approximation of a low-rank $110$. However,  the execution time for the SDA is  only 16.194 seconds.   By comparing the   numerical results from the SDA and  the dSDA, we know that	although the proposed dSDA is feasible, it is far from satisfactory.
For instance,   the columns of   $\widehat{V}_k$ doubles in each iterate,  thus  we  compute with many insignificant and unnecessary basis vectors.  In other words, the lower right corner of $(I+Y_k Y_k^{\T})^{-1}$ attenuates rapidly when the dSDA begins to converge although its size grows doubly, so  we have to  calculate many inconsequential values. In fact, the superfluous operations can be avoid when  ``truncation'' is applied. 
\begin{table}[H]
	\centering
\begin{tabular}{c|c|c||c|c|c}
\hline
$k$ &  $\rho(H_k)$ &  $\rank(H_k)$ &  $k$ & $\rho(H_k)$ & $\rank(H_k)$    
\\
\hline 
$1$ & $3.287\times 10^{-2}$ & $12$    & $6$ & $4.513\times 10^{-10}$ & $160$
\\
$2$ & $9.694\times 10^{-4}$ & $24$    & $7$ & $1.157\times 10^{-11}$ & $178$
\\
$3$ & $2.635\times 10^{-5}$ & $48$    & $8$ & $2.969\times 10^{-13}$ & $191$
\\
$4$ & $6.852\times 10^{-7}$ & $96$    & $9$ & $7.614\times 10^{-15}$ & $191$
\\
$5$ & $1.759\times 10^{-8}$ & $139$   &     &                        & 
\\
\hline
	\end{tabular}	
	\caption{Normalized residuals and ranks}
	\label{tab:residual}
\end{table}

The example merely illustrates the validity of the dSDA. As the closely related Krylov subspace methods, it only makes sense for applications to large-scale problems, with truncation implemented (as in the dSDA$_\mathrm{t}$). 

\end{example}

\section{Conclusions}\label{sec:conclusions}
In this paper, we present a decoupled form for the classical structure-preserving doubling algorithm, the dSDA. We only need to compute with  one recursion and may apply the associated low-rank structures, solving large-scale problems efficiently. Due to the page limitation, we only present the theoretical development for the dSDA. The computation issues in practical applications, especially the truncation process to control the rank of the approximate solution, will be presented in a companion paper.

\section*{Acknowledgements}
Part of the work was completed when the first three authors visited the ST Yau Research Centre at the National Chiao Tung University, Hsinchu, Taiwan. The first author is supported in part  by NSFC-11901290 and Fundamental Research Funds for the Central Universities, and the third author is supported  in part by NSFC-11901340.

{\small
	\bibliographystyle{siam}
	\bibliography{sdals}
}

\end{document}